\newtheorem{theorem}{Theorem}[section]
\newtheorem{corollary}[theorem]{Corollary}
\newtheorem{proposition}[theorem]{Proposition}
\newtheorem{lemma}[theorem]{Lemma}
\newtheorem{remark}[theorem]{Remark}
\newtheorem{example}[theorem]{Example}
\DeclareMathOperator{\jdup}{jdup}
\DeclareMathOperator{\diag}{diag}
\newcommand{\bR}{\mathbb{R}}
\newcommand{\bN}{\mathbb{N}}
\title{Bordering of Symmetric Matrices and an Application to the Minimum Number of Distinct Eigenvalues for the Join of Graphs
}
\author{
Aida Abiad\thanks{Department of Mathematics and Computer Science, Eindhoven University of Technology, Eindhoven, The Netherlands (\texttt{a.abiad.monge@tue.nl}). Department of Mathematics: Analysis, Logic and Discrete Mathematics, Ghent University, Ghent, Belgium. Department of Mathematics and Data Science, Vrije Universiteit Brussel, Brussels, Belgium. Partially supported by the Research Foundation Flanders (FWO) grant 1285921N.}
\and
Shaun M.~Fallat\thanks{Department of Mathematics and Statistics, University of Regina, Regina, SK, S4S 0A2, Canada (\texttt{shaun.fallat@uregina.ca}). Research supported in part by an NSERC Discovery Grant RGPIN--2019--03934.} 
\and
Mark~Kempton\thanks{Department of Mathematics, Brigham Young University, Provo UT 84602, U.S.A. (\texttt{mkempton@mathematics.byu.edu}).}
\and
Rupert H.~Levene\thanks{School of Mathematics and Statistics, University College Dublin, Belfield, Dublin 4, Ireland (\texttt{rupert.levene@ucd.ie} and \texttt{helena.smigoc@ucd.ie}).}
\and
Polona Oblak\thanks{University of Ljubljana, Faculty of Computer and Information Science and Faculty of Mathematics and Physics, Ljubljana, Slovenia. Institute of Mathematics, Physics, and Mechanics, Ljubljana, Slovenia (\texttt{polona.oblak@fri.uni-lj.si}). Partially supported by Slovenian Research Agency (research core funding no.~P1-0222 and project no.~J1-3004).}
\and
Helena \v Smigoc\footnotemark[4]
\and
Michael Tait\thanks{Department of Mathematics \& Statistics, Villanova University, Villanova PA 19085, U.S.A. (\texttt{michael.tait@villanova.edu}). Partially supported by National Science Foundation grant DMS-2011553 and a Villanova University Summer Grant.}
\and
Kevin Vander Meulen\thanks{Department of Mathematics, Redeemer University, ON, L9K 1J4, Canada (\texttt{kvanderm@redeemer.ca}).
Research supported in part by an NSERC Discovery Grant RGPIN--2022--05137.}
}
\date{}
\begin{document}

\maketitle
 
\begin{abstract}
   An important facet of the inverse eigenvalue problem for graphs is to determine the minimum number of distinct eigenvalues of a particular graph. We resolve this question for the join of a connected graph with a path. We then focus on bordering a matrix and attempt to control the change in the number of distinct eigenvalues induced by this operation. By applying bordering techniques to the join of graphs,  we obtain numerous results on the nature of the minimum number of distinct eigenvalues as vertices are joined to a fixed graph.
\end{abstract}

\noindent {\bf Keywords:} inverse eigenvalue problem,  
minimum number of distinct eigenvalues,
borderings, 
joins of graphs, 
paths, cycles, hypercubes.

\noindent {\bf AMS subject classification:} 05C50, 15A18.

\section{Introduction}

Given a simple graph $G$ on $|G|=n$ vertices, let $S(G)$ denote the set of all $n \times n$ real symmetric matrices $A=\big(a_{ij}\big)$ such that, for $i \neq j$, $a_{ij} \neq 0$ if and only if $i$ and $j$ are adjacent in~$G$. There are no restrictions on the main diagonal entries of $A$. The inverse eigenvalue problem for~$G$ asks which possible multi-sets of eigenvalues (spectra) occur in the class $S(G)$.
This is a very difficult problem for most graphs (which generally remains open, except for some sporadic graphs, including, for example, paths, cycles, complete graphs and some basic families of trees). Considerable work on this important problem has occurred over the past several decades (see the recent book \cite{HLS2022inverse}). Our work generally pertains to multiplicity lists associated to the spectra of matrices in $S(G)$.

Suppose $A$ is an $n \times n$ real symmetric matrix and $\lambda$ is an eigenvalue of $A$, that is $\lambda \in \sigma(A)$, where $\sigma(A)$ denotes the collection of eigenvalues (spectrum) of the matrix $A$. We let $m_{A}(\lambda)$ denote the multiplicity of $\lambda$ in $\sigma(A)$; if a scalar $\lambda$ is not an eigenvalue of a matrix $A$ then we define $m_A(\lambda)=0$.
Perhaps one of the most important results on the eigenvalues of real symmetric matrices is Cauchy's interlacing inequalities, from which it immediately follows that 
if $A$ is an $n\times n$ principal submatrix 
of an $(n+1) \times (n+1)$ real symmetric matrix~$B$, then 
$|m_A(\lambda)-m_B(\lambda)|\leq 1$ for any scalar $\lambda$. Another way to view the principal submatrix $A$ of $B$ is to consider that $B$ was obtained from $A$ by bordering $A$ with one row and column, and since the spectrum is invariant under permutation similarity, we might as well assume that the new row and column added to $A$ are the first row and column of $B$. More generally, given a symmetric $n\times n$ matrix $A$ and $r\ge 1$, an \emph{$r$-bordering} of $A$ is any symmetric $(n+r)\times (n+r)$ matrix $B$ which contains $A$ as a trailing principal $n\times n$ submatrix (that is, $A$ lies in rows and columns indexed by $\{r+1, r+2, \ldots, n+r\}$ of $B$), and it follows that $|m_A(\lambda)-m_B(\lambda)|\leq r$. For brevity, we will also let $A[S]$ denote the principal submatrix of $A$ lying in rows and columns indexed by $S\subseteq \{1,2,\ldots, n\}$.

 We define the {\em maximum multiplicity} of a symmetric matrix $A$ to be $$M(A)=\max\{m_A(\lambda):\lambda\in \sigma(A)\},$$ and the \emph{maximum multiplicity of a graph~$G$} is
\[ M(G) = \max\{ M(A) : A \in S(G)\}.\]
Let ${\bf m}=(m_1,\ldots,m_k)\in \bN_0^k$ be a sequence of $k$ nonnegative integers
and 
$q({\bf m})=|\{i\colon m_i>0\}|$.
We say ${\bf m}$ is an ordered multiplicity list for a symmetric matrix $A$, if $A$ possesses $q({\bf m})$ distinct eigenvalues $\lambda_1 < \lambda_2 < \cdots < \lambda_{q({\bf m})}$
and $m_A(\lambda_i)=m_{j_i}$ for $i=1,2,\ldots,q({\bf m})$, where $1\le j_1<j_2<\dots<j_{q(\bf m)}\le k$ are the $q({\bf m})$ indices $j$ with $m_j>0$.
In this case we write ${\bf m} = {\bf m}(A)$. For any matrix $A$, we write $q(A)=k$ if $A$ has $k$ distinct eigenvalues. For a given graph $G$, we define
\[ q(G)= \min\{q(A) : A\in S(G)\}.\]
 It is easy to observe that for any graph $G$ we have  $q(G)\ge \lceil\frac{|G|}{M(G)}\rceil$.
 In this paper our goal is to  investigate the behaviour of $q(\cdot)$ upon appending vertices to a fixed graph $G$. Here, when a vertex is appended, all possible edges between the existing vertices and the new vertex are inserted. 

We let $K_n$ ($n\geq 1$), $P_n$ ($n\geq 1$), $C_n$ ($n\geq 3$) denote the complete graph, the path, and the cycle on $n$ vertices.
If $G$ and $H$ are two graphs, then the {\em join of $G$ and $H$}, denoted by $G \vee H$, is the graph obtained from the union of $G$ and $H$ by adding all edges with one endpoint in $G$ and one endpoint in $H$. Hence, our goal in this paper is to investigate the behaviour of $q(G\vee H)$ for various graphs $G$ and $H$.

Given a graph~$G$, let $V(G)$ denote its vertex set. For $v \in V(G)$, we define
$\jdup(G,v)$ to be the supergraph of $G$ obtained from $G$ by duplicating $v$, with an edge connecting $v$ to its duplicate. That is, $V(\jdup(G,v))=V(G)\cup \{w\}$, where $w\not\in V(G)$, and $\{v,w\}\in E(\jdup(G,v))$, and $w$ has the same neighbours as $v$ in $\jdup(G,v)$. As observed in 
\cite[Theorem~3]{MR4284782} and \cite[Lemma~2.9]{MR3891770}, 
\begin{equation}\label{eq:jdup}
q(\jdup(G,v))\leq q(G).
\end{equation}
Since $K_{n+1}\vee H=\jdup(K_n\vee H,v)$ for any vertex $v\in K_n$, we see that $q(K_n\vee H)$ is monotone decreasing in $n$.

One of the first examples considered along these lines was the case of determining $q(K_1 \vee P_n)$. In \cite[Example~4.5]{MR3904092} it was shown that  $q(K_1\vee P_n)=\lceil \frac{n+1}2\rceil$ for $n\geq 2$.  We note here that the lower bound on $q(K_1\vee P_n)$ follows from Cauchy's interlacing inequalities since $q(P_n)=n$. 
Another important example is the star, or $S_{n} =K_1 \vee E_{n-1}$, where $E_{k}$ represents the empty graph on $k$ vertices. It is straightforward to show that $M(S_n)=n-2$ and that $q(S_n)=3$. We remark that the star has played a key role in the inverse eigenvalue problem for graphs (mostly in the case of trees), and in many ways was a critical tool used in \cite{MR978591} to establish a converse to Cauchy's interlacing inequalities. This technique has been extended and adapted to broaden the scope of which spectra can be realized by a graph that contains a dominating vertex (see, for example, \cite{MR2294340, MR2022294, levene2021paths}).


Merging the concepts of bordering a particular matrix and joining a vertex to a given graph, we are interested in determining the minimum number of distinct eigenvalues of a graph joined by a sequence of vertices, and we develop techniques, based in part of the nature of ordered multiplicity lists and eigenvectors, to aid this computation. We begin, in Section 2, with the necessary background and present a general upper bound (Theorem \ref{thm:general_upper_bound}) on $q(G \vee H)$ for connected graphs $G$ and $H$, which reduces to a simple exact formula in the case $G=P_n$. In Section~\ref{sec:border} we investigate the borderings of a given symmetric matrix. Theorem~\ref{thm:1-bordering} describes in detail how a $1$-bordering can change the spectrum of a symmetric matrix, and in Proposition~\ref{prop:C-bordering} we find a necessary and sufficient condition for the existence of an $r$-bordering of a symmetric matrix with a given value of $q$.
In Section~\ref{sec:joinswithcompletegraphs} we make several observations on the patterns of such bordered matrices, and we apply them to estimate $q(K_n\vee H)$ when $H$ is either a hypercube or a cycle. Finally, in Section~\ref{sec:limitations}, we pay particular attention to some possible limitations of our methods (Corollary~\ref{coro:limitationsnotsufficientcondition}).



\section{General graphs and paths}\label{sec:generalgraphsandpaths}

It is known that if $G$ and $H$ are two connected graphs and $|G|=|H|$, then $q(G \vee H)=2$ (see~\cite[Theorem 5.2]{MR3506498}). This result was extended in \cite{levene2021paths,LEVENE2022213} where it was shown that $q(G \vee H)=2$ if $G$ and $H$ are connected graphs with $\big||G|-|H|\big|\leq 2$. Moreover,  for trees $T_1$ and $T_2$ we have
$q(T_1\vee T_2)=2$ if and only if $ \big||T_1|-|T_2|\big|\le2$, so in this case the result is sharp.

An important notion used in \cite{levene2021paths} is generic realizability. Recall that a matrix (vector) is said to be \emph{nowhere zero} if none of its entries is zero. Suppose $G$ is a graph with $|G|=n$ vertices and $\sigma$ is a collection of realizable eigenvalues in $S(G)$ (with multiplicities), i.e., $\sigma=\sigma(A)$ for some $A\in S(G)$. The collection~$\sigma$ is said to be {\em generically realizable} in $S(G)$ if, for any finite set~$\mathcal{Y}$ of nonzero vectors in $\bR^n$, there is an orthogonal matrix $U$ such that $Uy$ is nowhere zero for all $y \in \mathcal{Y}$, and $UDU^{T} \in S(G)$, where $D$ is a diagonal matrix with eigenvalues equal to~$\sigma$ (see~\cite{levene2021paths} for more details). Observe that if we form an $n\times |\cal{Y}|$ matrix $Y$ from the columns of $\mathcal{Y}$ then this property ensures that there is an orthogonal matrix $U$ so that $UDU^T\in S(G)$ and $UY$ is nowhere zero, whenever $Y$ has no zero column (that is, no column of $Y$ is the zero vector in $\bR^n$). We will use the following result.

\begin{theorem}\label{generic}\cite[Theorem~2.5]{levene2021paths}
  Suppose $G$ is a connected graph. Then any $\sigma$ with $|G|$ distinct elements is generically realizable in $S(G)$. 
\end{theorem}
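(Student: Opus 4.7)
The plan is to combine a known realizability result with a transversality argument over a manifold of orthogonal matrices. Since $G$ is connected on $n=|G|$ vertices and $\sigma$ has $n$ distinct entries, a standard inverse eigenvalue theorem yields a realization $A_0\in S(G)$ with $\sigma(A_0)=\sigma$, and moreover one can arrange for $A_0$ to satisfy the \emph{Strong Spectral Property} (SSP). Writing $A_0=U_0DU_0^T$ with $U_0\in O(n)$ and $D=\diag(\lambda_1,\ldots,\lambda_n)$, SSP together with the implicit function theorem shows that the real algebraic set
$$\mathcal{M}=\{U\in O(n):UDU^T\in S(G)\}$$
is a smooth submanifold of $O(n)$ in some neighborhood $\mathcal{N}$ of $U_0$, of positive dimension equal to the number of edges $|E(G)|\ge n-1\ge 1$. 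Parametrizing curves in $O(n)$ by $U(t)=U_0\exp(tS)$ for skew-symmetric $S$, its tangent space at $U_0$ is identified with the space of skew-symmetric $S$ for which $U_0[S,D]U_0^T$ has zero entries in every off-diagonal non-edge position of $G$.

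Given the finite family $\mathcal{Y}$ of nonzero vectors, for each $y\in\mathcal{Y}$ and $i\in\{1,\ldots,n\}$ the equation $(Uy)_i=0$ cuts out a closed real-analytic subset $Z_{y,i}\subseteq\mathcal{M}$. If every $Z_{y,i}$ is a \emph{proper} subset of $\mathcal{N}$, then a finite union of proper real-analytic subsets of a connected positive-dimensional manifold is still proper, so $\mathcal{N}\setminus\bigcup_{y,i}Z_{y,i}$ is nonempty; any orthogonal $U$ in this complement satisfies $UDU^T\in S(G)$ and $Uy$ nowhere zero for all $y\in\mathcal{Y}$, which is exactly what is required.

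The main obstacle is therefore to verify that no $Z_{y,i}$ exhausts a neighborhood of $U_0$. Infinitesimally, this reduces to showing that the linear functional
$$S\mapsto (U_0Sy)_i=\langle U_0^Te_i,\,Sy\rangle$$
does not vanish identically on $T_{U_0}\mathcal{M}$; equivalently, the skew-symmetric rank-two matrix $(U_0^Te_i)y^T-y(U_0^Te_i)^T$ must not be orthogonal to $T_{U_0}\mathcal{M}$ in the trace inner product. I expect to establish this by exploiting both the richness of $T_{U_0}\mathcal{M}$ guaranteed by SSP and the connectedness of $G$, which together prevent the coordinate functionals $(Uy)_i$ from being trapped in the annihilator of the tangent space. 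If the nondegeneracy cannot be verified at the particular $U_0$ produced in the first step, a preliminary perturbation of $A_0$ within the $|E(G)|$-dimensional SSP family of realizations of $\sigma$ in $S(G)$ should move $U_0$ into general position, after which the above genericity argument goes through.
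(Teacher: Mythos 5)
The paper itself does not prove this statement---it is imported verbatim from the cited reference---so there is no internal proof to compare against; I can only assess your argument on its own terms. Your setup is sound as far as it goes: an SSP realization $A_0=U_0DU_0^T$ of $\sigma$ in $S(G)$ exists (indeed via the supergraph lemma applied to a diagonal matrix, which already has the SSP), the SSP does make $\mathcal{M}=\{U\in O(n):UDU^T\in S(G)\}$ a manifold of dimension $|E(G)|$ near $U_0$ with the tangent space you describe, and the Baire/analyticity reduction to showing that each $Z_{y,i}$ is a \emph{proper} analytic subset of a neighborhood of $U_0$ is standard. But the step you yourself label ``the main obstacle''---that no coordinate functional $U\mapsto(Uy)_i$ vanishes identically on $\mathcal{M}$ near $U_0$---is the entire content of the theorem, and you do not prove it: ``I expect to establish this'' and ``a preliminary perturbation should move $U_0$ into general position'' are placeholders, not arguments (the latter in particular begs the question, since ``general position'' is exactly the property to be established).

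This gap is not a formality, because your argument uses connectedness only to conclude $\dim\mathcal{M}=|E(G)|\ge 1$, and positive dimension of $\mathcal{M}$ is demonstrably insufficient. Take $G=K_3\sqcup K_3$ and $A_0=A_1\oplus A_2$ with the six distinct eigenvalues split between the blocks: $A_0$ has the SSP (any admissible $X$ has the form $\left(\begin{smallmatrix}0&Y\\Y^T&0\end{smallmatrix}\right)$ and $[A_0,X]=0$ gives $A_1Y=YA_2$, hence $Y=0$ by disjointness of spectra), so $\mathcal{M}$ is a $6$-dimensional manifold near $U_0$. Yet for $y=e_1$, the vector $Uy=Ue_1$ is the unit $\lambda_1$-eigenvector of the block-diagonal matrix $UDU^T$ and is therefore supported on a single block for every $U$ near $U_0$ in $\mathcal{M}$; hence $Z_{e_1,i}$ contains an entire neighborhood of $U_0$ for every $i$ in the other block, and the conclusion of the theorem fails. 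So the nondegeneracy you need must exploit connectedness of $G$ in an essential, structural way that your sketch never identifies. A secondary, smaller point: your proposed linearization (nonvanishing of $S\mapsto(U_0Sy)_i$ on $T_{U_0}\mathcal{M}$) is sufficient but not necessary for $Z_{y,i}$ to be proper, so the reduction is one-directional; but as written neither the linearized claim nor the full one is established, and this is where the proof is missing.
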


Theorem \ref{generic} allows us to construct matrices in $S(G \vee H)$ with some desired spectral properties, using matrices $A\in S(G)$ and $B \in S(H)$ with distinct eigenvalues. In particular, in the next result we explore this idea of constructing matrices in $S(G \vee H)$ with bounded number of distinct eigenvalues.


\begin{theorem}\label{thm:general_upper_bound}
Suppose $G$ and $H$ are two connected graphs. 
If $k$ is a positive integer
and $|G|\leq |H|\leq k |G|+k+1$, then $$q(G\vee H) \leq k+1.$$ 
In particular, for any connected graphs $G$ and $H$ with $\max\{|G|,|H|\}\ne1$ we have: 
\[ q(G\vee H)\le \left\lceil\frac{|G|+|H|}{\min\{|G|, |H|\}+1}\right\rceil.\]
\end{theorem}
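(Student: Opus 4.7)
The ``in particular'' inequality reduces to the first one as follows: since $q(G \vee H) = q(H \vee G)$ I may assume $|G| \le |H|$, and the condition $\max\{|G|,|H|\}\neq 1$ gives $|H| \ge 2$. Then $k := \lceil(|H|-1)/(|G|+1)\rceil \ge 1$ satisfies $|H| \le k|G|+k+1$ and $k+1 = \lceil(|G|+|H|)/(|G|+1)\rceil$, so the first inequality yields the second. Write $g = |G|$ and $h = |H|$; it remains to prove the first part.

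My plan is to construct an explicit symmetric matrix
$$N = \begin{pmatrix} D_A & X \\ X^T & D_B \end{pmatrix}$$
with $D_A \in \bR^{g\times g}$ and $D_B \in \bR^{h\times h}$ diagonal and having pairwise distinct entries, with $X$ having no zero row, and with $q(N) \le k+1$; then I will apply Theorem~\ref{generic} twice to convert $N$ into a matrix in $S(G \vee H)$ with the same spectrum. Applied first to $S(H)$ with $\mathcal Y_H$ equal to the (nonzero) rows of $X$, Theorem~\ref{generic} produces an orthogonal $V$ with $VD_BV^T \in S(H)$ and $VX^T$ entrywise nonzero. Applied next to $S(G)$ with $\mathcal Y_G$ equal to the (nowhere-zero, hence nonzero) columns of $XV^T$, it produces an orthogonal $U$ with $UD_AU^T \in S(G)$ and $Z := UXV^T$ entrywise nonzero. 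Then $M := \diag(U,V)\, N\, \diag(U,V)^T$ lies in $S(G \vee H)$ and satisfies $q(M) = q(N) \le k+1$.

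To build $N$, I fix target values $\alpha_0 < \alpha_1 < \cdots < \alpha_k$ and construct $N$, up to a permutation similarity, as the direct sum
$$N \sim M_1 \oplus \cdots \oplus M_g \oplus M_{g+1},$$
where for $1 \le j \le g$ each $M_j$ is a symmetric arrow matrix of size $r_j + 1$ (placing its arrow-head entry $a_j$ in $D_A$ and its $r_j$ tail-diagonal entries in $D_B$), and $M_{g+1}$ is a diagonal matrix of size $s$ lying entirely in $D_B$. The hypothesis $g \le h \le kg+k+1$ exactly permits a choice $r_1, \dots, r_g \in \{1, \dots, k\}$ and $s \in \{0, 1, \dots, k+1\}$ with $\sum_j r_j + s = h$. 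I prescribe the spectrum of each $M_j$ to be an $(r_j+1)$-element subset of $\{\alpha_0, \dots, \alpha_k\}$ and the diagonal of $M_{g+1}$ to be an $s$-element subset, which forces $\sigma(N) \subseteq \{\alpha_0, \dots, \alpha_k\}$ and hence $q(N) \le k+1$. Choosing the $r_j$ tail entries of $M_j$ to strictly interleave the sorted values of its prescribed spectrum, the classical arrow-matrix formulas determine $a_j$ uniquely and a nowhere-zero arrow vector $x_j \in \bR^{r_j}$ (up to signs). In the permuted block form, row $j$ of $X$ contains $x_j^T$ in the $r_j \ge 1$ columns indexing the tail of $M_j$ and zeros elsewhere, so $X$ has no zero row.

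The main obstacle will be verifying the distinctness of the entries of $D_A$ and $D_B$. The diagonal of $D_B$ consists of all tail diagonals $d_{j,\ell}$ together with the diagonal of $M_{g+1}$; each $d_{j,\ell}$ is chosen freely in a prescribed open interval disjoint from $\{\alpha_0, \dots, \alpha_k\}$, so a generic selection leaves all entries of $D_B$ distinct. The entries of $D_A$ are $a_j = \sum_{\alpha \in \sigma(M_j)} \alpha - \sum_\ell d_{j,\ell}$, affine functions of the tail entries; since $r_j \ge 1$, a further generic perturbation of the $d_{j,\ell}$ makes the $a_j$ pairwise distinct without affecting the previous conditions. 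Combining everything yields an $N$ with the required properties, and hence $q(G \vee H) \le k+1$.
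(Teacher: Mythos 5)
Your proposal is correct and takes essentially the same approach as the paper's proof: a direct sum of arrow matrices (one per vertex of $G$) plus a diagonal block, permuted into a two-by-two block form with diagonal blocks $D_A$, $D_B$ and an off-diagonal block with no zero row, followed by two successive applications of Theorem~\ref{generic} (first to $H$, then to $G$). The only differences are cosmetic, e.g.\ you secure distinctness of the arrow-head entries and tail entries by a generic perturbation where the paper simply demands these conditions at the outset.
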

\begin{proof}
Suppose $|G|=n$, $|H|=m$ and $k$ is a positive integer with $n\le m\le kn+k+1$. To prove the first claim, we will construct a matrix in $S(G\vee H)$ with distinct eigenvalues contained in $\mathcal S:=\{\lambda_j\}_{j=1}^{k+1}$ for any chosen set~$\mathcal S$ of $k+1$ distinct numbers. To this end, choose real numbers $\lambda_1 < \cdots < \lambda_{k+1}$, and integers $k_i$ with $1\le k_i\le k$ for $i=1,\ldots,n$, satisfying:
\begin{align*}
0\leq k':=m-\sum_{i=1}^nk_i &\leq k+1.
\end{align*}
Now select $n$ sets of real numbers 
$\mathcal M_i:=\{\mu_{i,1}, \ldots, \mu_{i,{k_i}}\}$, $ i=1,\ldots,n$, where we assume $$\mu_{i,1} <  \cdots < \mu_{i, {k_i}}.$$ Furthermore, we assume that $\mathcal M_i$  strictly interlaces $\{\lambda_1,\ldots,\lambda_{k_i+1}\}$, that the numbers $\mu_{i,j}$ for $j=1,\ldots,k_i$ and $i=1,\ldots,n$ are all distinct, and finally we demand that numbers $a_i: = \left(\sum_{j=1}^{k_i+1} \lambda_j \right) - \left( \sum_{j=1}^{k_i} \mu_{i,j} \right)$, $i=1,\ldots,n$, are all distinct. 
 Writing $\diag(x_1,\dots,x_m)$ for the diagonal matrix with main diagonal $(x_1,\dots,x_m)$, we define 
\begin{align*}
\Lambda &:= \mathrm{diag}(\lambda_1,\ldots, \lambda_{k'}),\\
D_i &:= \mathrm{diag}(\mu_{i,1},\ldots, \mu_{i, {k_i}}),\;i=1,\dots,n,\\
D_a &:= \mathrm{diag}(a_1,\ldots, a_n),\\
D_\mu &:=D_1\oplus\dots\oplus D_n.
\end{align*}
By \cite[Theorem 4.2]{MS} 
and our strict eigenvalue interlacing requirement, for $i=1,\ldots,n$ there exist matrices: 
\[
M_i: = \begin{pmatrix}
 a_i & \mathbf{b}_i^T \\
 \mathbf{b}_i & D_i
\end{pmatrix}
\]
with eigenvalues $\lambda_1,\ldots , \lambda_{k_i+1}$, where $\mathbf{b}_i$ is a nowhere zero vector. 
Clearly, the distinct eigenvalues of $M:=M_1\oplus\dots\oplus M_n\oplus \Lambda$ are contained in $\{\lambda_1,\dots,\lambda_{k+1}\}$, and in particular, $q(M)\le k+1$. The same is true for the matrix:
\[
M': = \begin{pmatrix}
D_a & B^T &0 \\
B & D_\mu&0\\
0&0&\Lambda
\end{pmatrix},
\]
where $B=\bigoplus_{i=1}^n \mathbf{b}_i$, since $M'$ is permutationally similar to $M$. 

Observe that the $n\times n$ matrix $D_a$ and the  $m\times m$ matrix $D_\mu\oplus \Lambda$ are both diagonal matrices with distinct eigenvalues. By Theorem \ref{generic},  their spectra are generically realizable for $G$ and $H$, respectively. Since the $m\times n$ matrix $Y:=\left(\begin{smallmatrix}B \\ 0 \end{smallmatrix}\right)$ has no zero column, by generic realizability for $H$ there is an orthogonal matrix $V$ so that $V( D_\mu\oplus \Lambda) V^T \in S(H)$ and $VY$ is nowhere zero. Since $(VY)^T=Y^TV^T$ is nowhere zero and so has no zero column, by generic realizability for~$G$ there is an orthogonal matrix $U$ so that  $UD_aU^T \in S(G)$ and $UY^TV^T$ is nowhere zero. 
 Now
\[
(U\oplus V)M' (U^T \oplus V^T) = \begin{pmatrix}
UD_a U^T & UY^TV^T  \\
VYU^T & V(D_\mu\oplus \Lambda) V^T
\end{pmatrix}\in S(G\vee H),
\]
so $q(G\vee H)\le k+1$ as required.

To see that the second claim follows from the first, observe that if $\max\{|G|,|H|\}>1$, then $|G|+|H|>\min\{|G|,|H|\}+1$, so $k:=\left\lceil{\frac{|G|+|H|}{\min\{|G|,|H|\}+1}}\right\rceil-1$ is a positive integer, and if $|G|\le |H|$, then $|G|\le |H|\le k|G|+k+1$, so $q(G\vee H)\le k+1=\left\lceil{\frac{|G|+|H|}{\min\{|G|,|H|\}+1}}\right\rceil$. By symmetry, the same holds if $|H|\le |G|$. 
\end{proof}



We remark that the hypothesis $|G|\le |H|\le k|G|+k+1$ in Theorem~\ref{thm:general_upper_bound} cannot be relaxed in general, since if we take $k=1$ and $G$ and $H$ are trees with $|H| >k|G|+k+1=|G|+2$, then $q(G\vee H)>2=k+1$ by~\cite[Example~3.5]{levene2021paths}.

The upper bound of Theorem \ref{thm:general_upper_bound} is sharp when $H$ is a path, as shown below. 


\begin{corollary}\label{GjoinP}
If $m>1$ and $G$ is a connected graph with $|G|=n\leq m$, 
then
$$q(G\vee P_m)=\left\lceil \frac{n+m}{n+1}\right\rceil.$$ 
\end{corollary}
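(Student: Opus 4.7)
For the upper bound, I would apply Theorem~\ref{thm:general_upper_bound} directly with $H=P_m$. Setting $k:=\lceil(n+m)/(n+1)\rceil-1$, the definition of the ceiling yields $k(n+1)\ge m-1$, i.e.\ $m\le kn+k+1$. Combined with the hypothesis $n\le m$, and noting that $m\ge 2$ forces both $\max\{n,m\}\ne 1$ and $k\ge 1$, the theorem delivers $q(G\vee P_m)\le k+1=\lceil(n+m)/(n+1)\rceil$.

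For the matching lower bound, the plan is to combine the bordering inequality recorded in the introduction with the fact that paths admit only simple eigenvalues. Concretely, every matrix in $S(P_m)$ is an irreducible symmetric tridiagonal matrix, and such a matrix necessarily has $m$ distinct eigenvalues (a classical consequence of the three-term recurrence for its leading principal minors). Hence $M(B)=1$ for every $B\in S(P_m)$. Given any $A\in S(G\vee P_m)$, its principal submatrix $B$ indexed by $V(P_m)$ lies in $S(P_m)$, because the edges of the join inside $V(P_m)$ are exactly the edges of $P_m$. Viewing $A$ as an $n$-bordering of $B$, the inequality $|m_A(\lambda)-m_B(\lambda)|\le n$ from the introduction gives $m_A(\lambda)\le n+1$ for every scalar $\lambda$; that is, $M(A)\le n+1$.

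Since the eigenvalue multiplicities of the $(n+m)\times(n+m)$ matrix $A$ sum to $n+m$, one has $q(A)\ge (n+m)/M(A)\ge (n+m)/(n+1)$, and rounding up gives $q(A)\ge \lceil(n+m)/(n+1)\rceil$. Minimising over $A\in S(G\vee P_m)$ completes the lower bound, matches the upper bound, and finishes the proof. I do not anticipate a genuine obstacle here: the nontrivial construction is already packaged inside Theorem~\ref{thm:general_upper_bound}, and $P_m$ is chosen precisely because it is a connected graph on $m$ vertices whose maximum multiplicity attains the minimum possible value $1$, which is what makes the bordering estimate tight.
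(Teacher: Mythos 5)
Your proposal is correct and takes essentially the same approach as the paper: the upper bound is read off from Theorem~\ref{thm:general_upper_bound}, and the lower bound uses that the principal submatrix indexed by $V(P_m)$ lies in $S(P_m)$ and so has only simple eigenvalues, whence interlacing gives $M(A)\le n+1$ and $q(A)\ge\left\lceil\frac{n+m}{n+1}\right\rceil$. Your arithmetic verifying the hypothesis $m\le kn+k+1$ for $k=\left\lceil\frac{n+m}{n+1}\right\rceil-1$ is also sound.
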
 
\begin{proof}
Let $X$ be a matrix in $S(G\vee P_m)$. Since $X$ has an $m \times m$ principal submatrix corresponding to $P_{m}$, this submatrix must have distinct eigenvalues. By eigenvalue interlacing, the matrix $X$ can have maximum eigenvalue multiplicity at most $n+1$. Hence 
\begin{equation*}
q(X) \geq \left\lceil \frac{|G\vee P_m|}{M(X)}\right\rceil  \geq \left\lceil\frac{n+m}{n+1}\right\rceil.
\end{equation*}
The opposite inequality was established in Theorem~\ref{thm:general_upper_bound}.
\end{proof}


\begin{remark}\label{rk:paths}
In the case $G=P_n$ where $2\le n\le m$, the formula of Corollary~\ref{GjoinP} improves on the upper bound $q(P_n\vee P_m)\le \lceil \tfrac {n+m}2\rceil$ which follows from \cite[Corollary~49]{MR3665573}, since $P_n\vee P_m$ contains a Hamiltonian cycle. 
\end{remark}



\noindent
We conclude this section with 
a theorem which resolves a question from \cite[Remark 3.13]{levene2021paths}.

\begin{corollary}\label{cor:valuejoincompletepath}
 If $m,n \geq 2$, then
 \[q(K_n \vee P_m) = 
 \left\lceil \frac{n+m}{n+1} \right\rceil.
 \]
\end{corollary}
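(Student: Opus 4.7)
The plan is to reduce this to Corollary~\ref{GjoinP} in one case, and to use the monotonicity of $q(K_n\vee H)$ in the other. I would split into the cases $n\le m$ and $n> m$.

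In the case $n\le m$, since $K_n$ is a connected graph with $|K_n|=n\le m$, Corollary~\ref{GjoinP} applies directly with $G=K_n$, giving $q(K_n\vee P_m)=\lceil(n+m)/(n+1)\rceil$ immediately. So this case requires essentially no work.

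In the case $n>m$, I would first observe that the target value simplifies to $2$: since $m\ge 2$ and $m\le n$, we have $n+1<n+m\le 2n<2(n+1)$, so $1<(n+m)/(n+1)<2$, hence $\lceil(n+m)/(n+1)\rceil=2$. For the upper bound $q(K_n\vee P_m)\le 2$, I would invoke the monotonicity noted after equation~\eqref{eq:jdup}: iterating $K_{k+1}\vee P_m=\jdup(K_k\vee P_m,v)$ for $v\in V(K_k)$ gives $q(K_n\vee P_m)\le q(K_m\vee P_m)$, and the first case (with $n$ replaced by $m$) yields $q(K_m\vee P_m)=\lceil 2m/(m+1)\rceil=2$. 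The lower bound is immediate: $K_n\vee P_m$ contains an edge, so any matrix in $S(K_n\vee P_m)$ has a nonzero off-diagonal entry and is not a scalar multiple of the identity, forcing $q\ge 2$.

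There is no serious obstacle here; the whole argument is a clean combination of Corollary~\ref{GjoinP} with the vertex-duplication monotonicity. The only point worth double-checking is the ceiling computation in the case $n>m$, to make sure the advertised formula and the value $2$ agree throughout the relevant range $m\ge 2$.
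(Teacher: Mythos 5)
Your proof is correct and follows essentially the same route as the paper: the case $n\le m$ is handled by Corollary~\ref{GjoinP}, and the case $n>m$ by the jdup monotonicity from~\eqref{eq:jdup} together with the fact that $q(K_m\vee P_m)=2$. The only (harmless) difference is that you obtain $q(K_m\vee P_m)=2$ internally from Corollary~\ref{GjoinP}, whereas the paper cites an external result for it, and you spell out the trivial lower bound $q\ge2$ explicitly.
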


\begin{proof}
For $n \leq m$, this is a special case of Corollary~\ref{GjoinP}. For $n\geq m$, note that $\lceil \frac{m+n}{n+1}\rceil=2$. We know from  Theorem 5.2 in
\cite{MR3506498} that $q(K_n \vee P_n)=2$, and for $n> m$, it follows that $q(K_n \vee P_m)=2$ by applying the notion of join duplication (jdup) and the inequality presented in (\ref{eq:jdup}). \end{proof}


\section{Bordering}\label{sec:border}


Recall from the introduction that an \emph{$r$-bordering} of a symmetric $n \times n$ matrix $A$ is any symmetric $(n+r)\times (n+r)$ matrix $B$ which contains $A$ as its $n \times n$ trailing principal submatrix of $B$.
Building upon the classical results derived from Cauchy's interlacing inequalities that characterize all possible eigenvalues of a $1$-bordering of $A$, we aim to understand the fewest number of distinct eigenvalues possible for an $r$-bordering of $A$. 

 First we have a look at $1$-borderings, noting that any $r$-bordering of $A$ can be obtained by repeated $1$-bordering. 

\begin{theorem}\label{thm:1-bordering}
Let $A$ be an $n\times n$ symmetric matrix and $A'$ a $1$-bordering of $A$. The following statements are equivalent: 
\begin{enumerate}
\item\label{thm:1-bordering1} $\mathcal{N}$ is the set of distinct eigenvalues $\lambda$ of $A'$ that satisfy $m_{A'}(\lambda)=m_{A}(\lambda)+1$, and 
$\mathcal{R}_0$ is the set of distinct eigenvalues $\lambda$ of $A$ that satisfy $m_{A'}(\lambda)=m_{A}(\lambda)-1$.
\item\label{thm:1-bordering2}
$A'=\left(
\begin{array}{cc}
 \alpha & {\bf b}^TU_0^T \\
 U_0{\bf b} & A \\
\end{array}
\right)$
where $k:=|\mathcal{R}_0|$, $U_0$ is an $n\times k$ matrix with $U_0^T U_0=I_{k}$, and $U_0^TAU_0$ is a $k \times k$ diagonal matrix $D_0$ with distinct eigenvalues equal to $\mathcal{R}_0$. Further, ${\bf b} \in \mathbb{R}^k$ is a nowhere zero vector
so that the matrix $$B=\left(
\begin{array}{cc}
 \alpha & {\bf b}^T \\
 {\bf b} & D_0 \\
\end{array}
\right)$$
has eigenvalues $\mathcal{N}$. 
\end{enumerate}
If the above hold, then
$A'$ is similar to a matrix of the form $D_{\mathcal{N}} \oplus D_1$ for some diagonal matrix $D_1$ via an orthogonal similarity using
\begin{equation}\label{eq:Wsimilarity}
W=\left(
\begin{array}{cc}
 {\bf v}^T & 0 \\
 U_0V_0 & U_1 \\
\end{array}
\right),
\end{equation}
where 
$V=\left(
\begin{array}{c}
 {\bf v}^T \\
 V_0
\end{array}
\right)\in \bR^{|\mathcal{N}|\times |\mathcal{N}|}$ is an orthogonal matrix that
satisfies $V^T BV=D_{\mathcal{N}}$, and $U=\left(
\begin{array}{cc}U_0 & U_1\end{array}
\right)$ is an orthogonal matrix that satisfies $U^TAU=D_{0} \oplus D_1$. 
\end{theorem}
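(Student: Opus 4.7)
The plan is to prove the two directions separately and then verify the claim about $W$ by a direct block computation. Throughout, the key tool I would invoke is the classical arrowhead-matrix fact: if $\begin{pmatrix} \alpha & {\bf b}^T \\ {\bf b} & D_0 \end{pmatrix}$ has a nowhere-zero arrow ${\bf b}$ and $D_0$ has distinct diagonal entries, then it has $k+1$ simple eigenvalues which strictly interlace the diagonal of $D_0$; in particular these eigenvalues are disjoint from $\sigma(D_0)$.

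For \eqref{thm:1-bordering2} $\Rightarrow$ \eqref{thm:1-bordering1}, I would conjugate $A'$ by $1\oplus U$. Using $U^T A U = D_0\oplus D_1$ together with $U_0^T U = \begin{pmatrix} I_k & 0\end{pmatrix}$, the resulting matrix is block-diagonal and equal to $B\oplus D_1$. The arrowhead fact then identifies $\sigma(B)$ as a set $\mathcal{N}$ of $k+1$ simple eigenvalues disjoint from $\mathcal{R}_0 = \sigma(D_0)$, while $D_1$ carries the spectrum of $A$ with each $\mu\in\mathcal{R}_0$ appearing $m_A(\mu)-1$ times rather than $m_A(\mu)$ times. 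Reading off multiplicities immediately yields \eqref{thm:1-bordering1}.

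For \eqref{thm:1-bordering1} $\Rightarrow$ \eqref{thm:1-bordering2}, I would start from an arbitrary $1$-bordering $A' = \begin{pmatrix} \alpha & c^T \\ c & A \end{pmatrix}$. Diagonalize $A$ by an orthogonal $\widetilde U$ whose columns are grouped by equal eigenvalue, and then, within each eigenspace, further rotate so that the corresponding block of $\widetilde U^T c$ is either zero or a positive multiple of a single coordinate vector. This exhibits $A'$, up to orthogonal similarity, as an arrowhead block $\widetilde B$ direct-summed with a diagonal block $\widetilde D_1$, where the diagonal entries of $\widetilde B$ other than $\alpha$ are the eigenvalues $\mu$ of $A$ whose block is nonzero; call this set $\mathcal{S}$. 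The crux of the argument, and the step that needs the most care, is to show $\mathcal{S} = \mathcal{R}_0$: the arrowhead fact forces $m_{A'}(\mu) = m_A(\mu) - 1$ for each $\mu \in \mathcal{S}$, so $\mathcal{S}\subseteq\mathcal{R}_0$, while a $\mu \in \mathcal{R}_0$ with zero block would preserve all $m_A(\mu)$ copies via the surviving eigenvectors of $A$, contradicting $m_{A'}(\mu) < m_A(\mu)$. Collecting the relevant eigenvectors of $A$ into $U_0$ and unwinding the orthogonal changes of basis yields the form asserted in \eqref{thm:1-bordering2}.

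Finally, for the claim about $W$, orthogonality follows from a short block computation using $V^T V = I_{k+1}$, $U^T U = I_n$, and $U_0^T U_1 = 0$; the identity $W^T A' W = D_\mathcal{N}\oplus D_1$ then follows from block multiplication with the ingredients $A U_0 = U_0 D_0$ (a consequence of $U_1^T A U_0 = 0$), $A U_1 = U_1 D_1$, and $V^T B V = D_\mathcal{N}$. The main obstacle throughout is the matching step $\mathcal{S} = \mathcal{R}_0$; once that is settled, the rest is careful bookkeeping of orthogonal changes of basis.
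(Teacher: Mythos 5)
Your proposal is correct and follows essentially the same route as the paper: diagonalize $A$, compress each eigenspace's component of the bordering vector to a single coordinate, and identify the eigenvalues with nonzero coupling as exactly $\mathcal{R}_0$ via the arrowhead/strict-interlacing fact (the paper cites this as \cite[Lemma 5.1]{MR3567513} for the nontrivial implication), then verify the $W$-similarity by block computation. No substantive difference in approach or in the handling of the key step $\mathcal{S}=\mathcal{R}_0$.
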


\begin{proof}
\begin{description}
    \item[$(1\Rightarrow 2)$]
Let $\lambda_1, \ldots, \lambda_q$ be the distinct eigenvalues of $A$ with multiplicities $m_i:=m_{A}(\lambda_i)$, $i=1,\ldots,q$, and let $U'$ be an orthogonal matrix that diagonalizes $A$, that is, $U'^TAU'=\oplus_{j=1}^q \lambda_j I_{m_j}$. Then for some $\alpha\in \bR$ and ${\mathbf a}\in \bR^n$, we have $$A'_1:=(1\oplus U'^T)A'(1\oplus U')=\left(\begin{array}{cc}
 \alpha & {\bf a}^T \\
 {\bf a} & \oplus_{j=1}^q \lambda_j I_{m_j} \\
\end{array}\right).$$  

Write ${\bf a}^T=\left(
\begin{array}{cccc}
 {\bf a}_1^T & {\bf a}_2^T & \cdots & {\bf a}_q^T \\
\end{array}
\right)$, where ${\bf a}_i\in \mathbb{R}^{m_i}$. 
Choose orthogonal matrices $Z_i\in \mathbb{R}^{m_i\times m_i}$ that satisfy $Z_i{\bf a}_i=b_i {\bf e}_1$, where $b_i \in \mathbb{R}$ and ${\bf e}_1$ denotes the basic unit vector in $\bR^{m_i}$ whose first element is equal to $1$. Note that $b_i\neq 0$ if and only if $\lambda_i \in \mathcal R_0$ (see for example \cite[Lemma 5.1]{MR3567513}  for the nontrivial implication).  
Applying the orthogonal similarity $1 \oplus (\oplus_{i=1}^q Z_i )$ to $A'_1$, followed by a permutation similarity $1 \oplus P$, we see that $A'$ is orthogonally similar to $B \oplus D_1$, where $D_1$ is a diagonal $(n-k)\times (n-k)$ matrix, \[B=\left(
\begin{array}{cc}
 \alpha & {\bf b}^T \\
 {\bf b} & D_0 \\
\end{array}
\right)\]
and ${\bf b}\in \bR^{k}$ is a nowhere zero vector. In summary, $U:=U'(\oplus_{i=1}^k Z_i^{T} )P$ satisfies $U^TAU=D_0 \oplus D_1$ and $(1 \oplus U)^TA'(1\oplus U)=B \oplus D_1$. 
Writing $U=\left(
\begin{array}{cc}
 U_0 & U_1 \\
\end{array}
\right)$ where $U_0\in \bR^{n\times k}$ and computing $A'=(1 \oplus U)(B \oplus D_1)(1\oplus U^T)$ gives the form for $A'$ as in item~\ref{thm:1-bordering2}.
\item[$(2\Rightarrow 1)$] Let $A'$ and $U_0$ be as in item \ref{thm:1-bordering2}, and $U_1 \in  \mathbb{R}^{n\times (n-k)}$ be such that $U:=\left(
\begin{array}{cc}
 U_0 & U_1 \\
\end{array}
\right)$ is orthogonal and
$ U^TAU$ is a diagonal matrix $D_0 \oplus D_1$. From
$$(1 \oplus U^T)A'(1\oplus U)=B \oplus D_1$$
we conclude that $A'$ has eigenvalues as stated in item \ref{thm:1-bordering1}.
\end{description}\medskip

To prove the final claim we note that:
$$W:=(1\oplus U)(V\oplus I_{n-k})=\left(
\begin{array}{cc}
 {\bf v}^T & 0 \\
 U_0V_0 & U_1 \\
\end{array}
\right)$$
and $W^TA'W=(V^T\oplus I)(B\oplus D_1)(V\oplus I)=D_{\mathcal{N}} \oplus D_1$, as claimed.
\end{proof}

Theorem \ref{thm:1-bordering} provides a construction of a $1$-bordering of a symmetric matrix, subject to quite general eigenvalue constraints. Our first application of this theorem produces a known result
\cite[Thm. 4.3.10]{HJ1}. We include it here mostly to establish notation that we will depend on in the rest of this section.


\begin{corollary}\label{lem:1-border}
Let $A$ be an $n\times n$ symmetric matrix, $\mathcal{R}$ the set of distinct eigenvalues of $A$, and $\mathcal{R}_0\subseteq \mathcal{R}$. If $\mathcal{N}$ is any set of $|\mathcal{R}_0|+1$ distinct real numbers which strictly interlaces $\mathcal{R}_0$, then there is a $1$-bordering $A'$ of $A$ so that for $\lambda \in \bR$,
\begin{align*}
m_{A'}(\lambda)=\begin{cases}m_{A}(\lambda)-1 &\text{if }\lambda \in \mathcal{R}_0, \\
m_{A}(\lambda)+1 &\text{if }\lambda \in \mathcal{N}, \\
m_{A}(\lambda) &\text{otherwise},\end{cases}
\end{align*}
where $m_{A'}(\lambda)=0$ means that $\lambda$ is not an eigenvalue of $A'$. 
\end{corollary}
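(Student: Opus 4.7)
The plan is to apply the $(\ref{thm:1-bordering2})\Rightarrow(\ref{thm:1-bordering1})$ direction of Theorem~\ref{thm:1-bordering} after explicitly constructing an $A'$ in the form specified by item~\ref{thm:1-bordering2}. The only content I need to supply is a choice of $U_0$, $\alpha$, and $\mathbf{b}$ compatible with the prescribed sets $\mathcal{R}_0$ and $\mathcal{N}$; the conclusion on multiplicities is then a direct readout of the theorem.

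First, set $k:=|\mathcal{R}_0|$ and pick one unit eigenvector of $A$ for each eigenvalue in $\mathcal{R}_0$. Since eigenvectors of a symmetric matrix corresponding to distinct eigenvalues are orthogonal, stacking these as columns yields $U_0\in\mathbb{R}^{n\times k}$ with $U_0^T U_0=I_k$ and $U_0^T A U_0=D_0$ a $k\times k$ diagonal matrix whose diagonal entries are the elements of $\mathcal{R}_0$. Next, because $\mathcal{N}$ is a set of $k+1$ real numbers strictly interlacing the diagonal of $D_0$, the classical bordering result \cite[Theorem~4.2]{MS} (already invoked in the proof of Theorem~\ref{thm:general_upper_bound}) supplies an $\alpha\in\mathbb{R}$ and a nowhere zero $\mathbf{b}\in\mathbb{R}^k$ so that
\[
B=\begin{pmatrix}\alpha & \mathbf{b}^T\\ \mathbf{b} & D_0\end{pmatrix}
\]
has spectrum exactly $\mathcal{N}$. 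Setting $A':=\left(\begin{smallmatrix}\alpha & \mathbf{b}^T U_0^T\\ U_0\mathbf{b} & A\end{smallmatrix}\right)$ produces a matrix in the form demanded by item~\ref{thm:1-bordering2}, so the $(\ref{thm:1-bordering2})\Rightarrow(\ref{thm:1-bordering1})$ implication of Theorem~\ref{thm:1-bordering} says that the set of eigenvalues whose multiplicity increases by one in passing from $A$ to $A'$ is exactly $\mathcal{N}$, and the set of eigenvalues whose multiplicity decreases by one is exactly $\mathcal{R}_0$. Cauchy's interlacing inequalities force $|m_{A'}(\lambda)-m_A(\lambda)|\le 1$ for every $\lambda\in\mathbb{R}$, so every other multiplicity is preserved, yielding the three-case formula.

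The only step that is not already guaranteed by Theorem~\ref{thm:1-bordering} is the construction of the nowhere zero $\mathbf{b}$, and this is precisely where \emph{strict} interlacing is needed: a zero entry of $\mathbf{b}$ would decouple a row and column of $B$, producing an eigenvalue of $B$ equal to the corresponding diagonal entry of $D_0$, which is ruled out because $\mathcal{N}\cap\mathcal{R}_0=\emptyset$. Since this existence statement is standard and is the same result already cited earlier in the paper, I expect no genuine obstacle beyond this single invocation.
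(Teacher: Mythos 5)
Your proof is correct and takes essentially the same route as the paper: both arguments build $D_0$ (equivalently $U_0$) from orthonormal eigenvectors for $\mathcal{R}_0$, invoke the classical bordered-diagonal inverse eigenvalue result to obtain a nowhere zero $\mathbf{b}$ with $\sigma(B)=\mathcal{N}$ (the paper cites Boley--Golub where you cite \cite[Theorem~4.2]{MS}, an interchangeable reference), and then read off the multiplicities from the $(2)\Rightarrow(1)$ direction of Theorem~\ref{thm:1-bordering} together with Cauchy interlacing for the untouched eigenvalues.
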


\begin{proof}
Let $D_0$ be a diagonal matrix with distinct diagonal elements equal to elements in $\mathcal{R}_0$.
By \cite{boley-golub},
since $\mathcal{N}$ strictly interlaces $\mathcal{R}_0$,  there exist $a \in \mathbb{R}$ and a (nowhere zero) vector ${\bf b} \in \mathbb{R}^{|\mathcal{R}_0|}$  so that the matrix $$B=\left(
\begin{array}{cc}
 a & {\bf b}^T \\
 {\bf b} & D_0 \\
\end{array}
\right)$$
has the set of eigenvalues equal to $\mathcal{N}$. The result now follows from Theorem \ref{thm:1-bordering}.
%
\end{proof}

Starting with the eigenvalues of $A$, we will reduce the number of distinct eigenvalues of an $r$-bordering of $A$ by removing all eigenvalues from different intervals. Along these lines, we let $m_A(\alpha,\beta)$ denote the sum of multiplicities of all eigenvalues $\lambda$ of $A$ that are contained in the open interval $(\alpha,\beta)$, where $\alpha,
\beta\in\bR\cup \{-\infty,\infty\}$ with $\alpha<\beta$.

The following straightforward consequence of eigenvalue interlacing produces a lower bound on $r$ for an $r$-bordering to have no eigenvalues in a given interval. 
\begin{lemma}\label{lem:bordering-interlacing}
   If $M$ is an $r$-bordering of a symmetric matrix~$A$ and $\alpha,\beta\in \bR\cup \{-\infty,\infty\}$ with $\alpha<\beta$, then \[|m_A(\alpha,\beta)- m_M(\alpha,\beta)|\le r.\]
\end{lemma}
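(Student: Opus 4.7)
The statement is a direct consequence of Cauchy's interlacing inequalities, as hinted by the authors. My plan is to handle the case $r=1$ by counting eigenvalues in terms of the interlacing relations, and then lift to general $r$ by iterating.

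For the base case $r=1$, I would label the eigenvalues of $A$ and $M$ in nondecreasing order as $\lambda_1(A)\le\dots\le\lambda_n(A)$ and $\lambda_1(M)\le\dots\le\lambda_{n+1}(M)$, and use the Cauchy interlacing inequalities $\lambda_i(M)\le \lambda_i(A)\le \lambda_{i+1}(M)$ for $i=1,\dots,n$. Setting $a:=|\{i:\lambda_i(A)\le\alpha\}|$ and $b:=|\{i:\lambda_i(A)\ge\beta\}|$, so that $m_A(\alpha,\beta)=n-a-b$, the interlacing immediately yields
\[
a\le |\{i:\lambda_i(M)\le\alpha\}|\le a+1,\qquad b\le |\{i:\lambda_i(M)\ge\beta\}|\le b+1.
\]
Since $m_M(\alpha,\beta)=(n+1)-|\{i:\lambda_i(M)\le\alpha\}|-|\{i:\lambda_i(M)\ge\beta\}|$, combining these bounds gives $m_A(\alpha,\beta)-1\le m_M(\alpha,\beta)\le m_A(\alpha,\beta)+1$, which is the $r=1$ version of the claim. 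The cases $\alpha=-\infty$ or $\beta=\infty$ are handled by taking the corresponding count to be $0$.

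To lift this to arbitrary $r\ge 1$, I would build a chain of $1$-borderings between $A$ and $M$: define $M_i$ to be the trailing $(n+i)\times(n+i)$ principal submatrix of $M$ for $i=0,\dots,r$, so that $M_0=A$, $M_r=M$, and each $M_{i+1}$ is a $1$-bordering of $M_i$. Applying the base case to each step and using the triangle inequality,
\[
|m_M(\alpha,\beta)-m_A(\alpha,\beta)|\le \sum_{i=0}^{r-1}|m_{M_{i+1}}(\alpha,\beta)-m_{M_i}(\alpha,\beta)|\le r,
\]
as required. I don't foresee a substantive obstacle here; the only real care is in tracking strict versus non-strict endpoints in the eigenvalue counts, and in checking that the trailing principal submatrices genuinely give a chain of $1$-borderings (which is immediate from the definition).
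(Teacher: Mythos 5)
Your proof is correct and follows exactly the same route as the paper: establish the $r=1$ case from Cauchy's interlacing inequalities, then iterate through the chain of trailing principal submatrices and apply the triangle inequality. The paper merely states this in two sentences, whereas you have written out the eigenvalue-counting details of the base case explicitly; there is no substantive difference.
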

\begin{proof}
  The eigenvalues of $A$ and any $1$-bordering of $A$ must interlace by Cauchy's interlacing inequalities, which establishes the case $r=1$. In general, $M$ is obtained by $r$ successive $1$-borderings of $A$, and the statement follows immediately.
\end{proof}

Let ${\bf m}=(m_1,\ldots,m_k)\in \bN_0^k$ be an ordered multiplicity list of a symmetric matrix. 
For $2\leq t\leq k$, we define
\begin{equation}\label{eq:C(m,t)}
 C({\bf m},t)=\min_{1=p_1\le p_2\le \cdots \le p_t=k}\left( \max_{1\le i\le t-1}
g_{\bf m}
(p_i,p_{i+1})\right)\end{equation}
where \[
g_{\bf m}(p_i,p_{i+1}):=
\sum_{j=p_i+1}^{p_{i+1}-1}m_j .\]

In colloquial terms,
$C({\bf m},t)$ is the solution to the problem of minimizing the largest ``gap multiplicity'' $g_{\bf m}$ of ${\bf m}$, over the gaps given by the various choices of $t$ ``gap boundaries'' $1=p_1\le p_2\le \cdots\le p_t=k$. 

\begin{example}
To illustrate the preceding definition, we demonstrate how $C({\bf m},t)$ is computed for the case when $\mathbf{m}=(1,2,5,5,3,1)$ and $t=3$, by listing the possible values of the gap multiplicities for the various choices of gap boundary $p_2$ in Table~\ref{tab:gaps}. 
The minimum of the maximum gap multiplicities is 7, so $C(\mathbf{m},3)=7$.
One can also determine that $C(\mathbf{m},4)=3$, $C(\mathbf{m},5)=2$, and $C(\mathbf{m},2)=15$. 
\end{example}

\begin{table}[htb]
\begin{center}
\begin{tabular}{cccc}\hline
$p_2$ & $g_{\bf{m}}(p_1,p_2)$ & $g_{\bf{m}}(p_2,p_3)$ & \parbox{3cm}{\centering \strut maximum gap multiplicity\strut}\\ \hline
1 & $0$ & $\hspace{-5ex}m_2+m_3+m_4+m_5=2+5+5+3\hspace{-3ex}$ & 15\\
2 & $0$ & $m_3+m_4+m_5=5+5+3$ & 13\\
3 & $m_2=2$ & $m_4+m_5=5+3$ & 8 \\
4 & $m_2+m_3=2+5$ & $m_5=3$ & 7 \\
5 & $m_2+m_3+m_4=2+5+5$&  $0$ & 12 \\
6 & $m_2+m_3+m_4+m_5=2+5+5+3\hspace{-5ex}$&  $0$ & 15 \\ \hline
\end{tabular}
\caption{The list of possible values of $p_2$ and the corresponding parameters inside the formula for $C({\mathbf m},t)$ for ${\bf m}=(1,2,5,5,3,1)$ and $t=3$. Note that in each case, we have $p_1=1$ and $p_3=6$, whereas $p_2$ can vary. The list of multiplicities
in each of the two gaps derived from each value of $p_2$ and the corresponding maximum gap multiplicities are given.}\label{tab:gaps}
\end{center}
\end{table}

Note that $q({\bf m})\leq t$ if and only if $C({\bf m},t)=0$. Indeed, if $q({\bf m})\le t$, then we may assume that ${\bf m}\in \bN^k$ where $k=q({\bf m})\le t$, and then choosing $p_i=\min \{i,k\}$ in \eqref{eq:C(m,t)} shows that $C({\bf m},t)=0$; conversely, if $C({\bf m},t)=0$ is attained for some particular $1=p_1\le p_2\le\dots\le p_t=k$, then 
$m_i=0$ for all $i\in [k]\setminus \{p_1,\dots,p_t\}$, 
hence $q({\bf m})\le t$. Hence, we can view $C({\bf m},t)$ as a measure of how far the multiplicity list ${\bf m}$ is from having $q({\bf m})=t$. This will be made more precise in the next proposition. 

\begin{proposition}\label{prop:C-bordering}
Let $A$ be a symmetric matrix with $k\ge2$ distinct eigenvalues and ordered multiplicity list ${\bf m}=(m_1,\ldots,m_k)\in \bN^k$, and let $2\le t\le k$. For $r \in \bN_0$ the following statements are equivalent:
\begin{enumerate}
    \item there is an $r$-bordering $M$ of $A$ with $q(M)\le t$;
    \item $C({\bf m},t)\le r$.
\end{enumerate}
\end{proposition}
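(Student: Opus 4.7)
The plan is to prove both implications using complementary techniques: the direction $(1)\Rightarrow(2)$ uses eigenvalue interlacing through Lemma~\ref{lem:bordering-interlacing}, while the converse $(2)\Rightarrow(1)$ requires an explicit construction of $M$ as a chain of $1$-borderings controlled via Corollary~\ref{lem:1-border}.

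For $(1)\Rightarrow(2)$, assume $M$ is an $r$-bordering of $A$ with distinct eigenvalues $\nu_1<\cdots<\nu_s$, where $s=q(M)\le t$. Since $k\ge 2$, Cauchy interlacing forces $s\ge 2$, with $\nu_1\le \lambda_1$ and $\nu_s\ge \lambda_k$. Lemma~\ref{lem:bordering-interlacing} applied to each open interval $(\nu_i,\nu_{i+1})$ (which contains no eigenvalues of $M$) gives $m_A(\nu_i,\nu_{i+1})\le r$ for $i=1,\dots,s-1$. I would set $p_i':=\max(\{1\}\cup\{j:\lambda_j\le \nu_i\})$ for $i=1,\dots,s$ and pad with $p_i':=k$ for $i>s$. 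One checks that $p_1'=1$, $p_t'=k$, the sequence is non-decreasing, and every $\lambda_j$ with $p_i'<j<p_{i+1}'$ lies in $(\nu_i,\nu_{i+1})$; hence $g_{\mathbf{m}}(p_i',p_{i+1}')\le m_A(\nu_i,\nu_{i+1})\le r$, so $C(\mathbf{m},t)\le r$.

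For $(2)\Rightarrow(1)$, fix a minimizer $1=p_1\le\cdots\le p_t=k$ in the definition of $C(\mathbf{m},t)$; after removing duplicates we obtain strictly increasing ``markers'' $1=q_1<\cdots<q_{t'}=k$ with $t'\le t$ and $g_{\mathbf{m}}(q_i,q_{i+1})\le r$. I would construct $M$ as a chain of $r$ successive $1$-borderings $A=A_0,A_1,\ldots,A_r=M$ obtained from Corollary~\ref{lem:1-border}. Define the gaps $G_i:=\{q_i+1,\dots,q_{i+1}-1\}$. At step $s$, let $T_s$ be the set of indices $i$ for which $G_i$ still contains some $\lambda_j$ with positive multiplicity in $A_{s-1}$; for each $i\in T_s$ pick such a $\lambda_{j^{(s)}_i}$, and set $\mathcal{R}^{(s)}_0:=\{\lambda_{j^{(s)}_i}:i\in T_s\}$. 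Writing $T_s=\{i_1<\cdots<i_m\}$, the crucial choice is $\mathcal{N}^{(s)}:=\{\lambda_{q_{i_1}},\lambda_{q_{i_1+1}},\lambda_{q_{i_2+1}},\dots,\lambda_{q_{i_m+1}}\}$, which consists of $m+1$ distinct markers and strictly interlaces $\mathcal{R}^{(s)}_0$ (take $\mathcal{N}^{(s)}=\{\lambda_{q_1}\}$ if $T_s=\emptyset$). Crucially, $\mathcal{N}^{(s)}\subseteq\{\lambda_{q_1},\dots,\lambda_{q_{t'}}\}$ at every step, so no eigenvalue outside the markers is ever introduced.

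The main obstacle is bookkeeping: verifying that this process terminates correctly in exactly $r$ steps. Each step decreases the total remaining non-marker multiplicity in each active gap by exactly one, while leaving inactive gaps untouched. Because gap $i$ starts with total multiplicity $g_{\mathbf{m}}(q_i,q_{i+1})\le r$, after $r$ steps every gap has been exhausted, so $\sigma(M)\subseteq\{\lambda_{q_1},\dots,\lambda_{q_{t'}}\}$ and $q(M)\le t'\le t$. A secondary subtlety, handled by the padding in the $(1)\Rightarrow(2)$ direction, is that $M$ may have strictly fewer than $t$ distinct eigenvalues.
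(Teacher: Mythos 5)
Your proposal is correct and follows essentially the same strategy as the paper's proof: the forward direction extracts gap boundaries from the eigenvalues of $M$ and bounds the gap multiplicities via Lemma~\ref{lem:bordering-interlacing}, and the converse builds $M$ as a chain of $1$-borderings from Corollary~\ref{lem:1-border}, each time removing one eigenvalue from every still-nonempty gap while only reintroducing marker eigenvalues. The only differences are cosmetic (your explicit formula for $\mathcal{N}^{(s)}$ and the $T_s=\emptyset$ padding make explicit what the paper leaves as ``choose any such set'' and the trivial continuation).
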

\begin{proof}Let $\lambda_1<\cdots<\lambda_k$ be the distinct eigenvalues of $A$, with $m_A(\lambda_i)=m_i$ for $i=1,\ldots,k$. 
\begin{description}
    \item[$(1\Rightarrow 2)$] 
    Suppose $\mu_1<\cdots<\mu_\tau$ are the distinct eigenvalues of some $r$-bordering $M$ of $A$, where $\tau\le t$. By eigenvalue interlacing, we have $\lambda_j\in [\mu_1,\mu_\tau]$ for every $j$. Hence, there is a unique $i_0$ with $\lambda_1\in [\mu_{i_0},\mu_{i_0+1})=[\nu_1,\nu_2)$, where $\nu_i:=\mu_{i_0-1+i}$. For $1\le i\le \tau-i_0$, define
  \[ p_i:=\min\{j\colon 1\leq j \leq k, \lambda_j\in [\nu_{i},\nu_{i+1})\} \]
  and let $p_{i}:=k$ for $i> \tau-i_0$. Then $1=p_1\le p_2\le\cdots\le p_t=k$. Moreover, if $p_i<j<p_{i+1}$, then $\lambda_j\in (\nu_{i},\nu_{i+1})$, so
  \[
  g_{\mathbf m}(p_i,p_{i+1})= \sum_{j:p_i<j<p_{i+1}} m_A(\lambda_j)\le m_A(\nu_{i},\nu_{i+1})\le r,\]
  where the final inequality follows from Lemma~\ref{lem:bordering-interlacing}, since $m_M(\nu_i,\nu_{i+1})=0$. Hence,
  \[ C({\bf m},t)\le \max_{1\le i\le t-1} 
  g_{\mathbf m}(p_i,p_{i+1})\le r,\]
  as required.
    \item[$(2\Rightarrow 1)$] If $C({\bf m},t)=0$, then $q(A) \leq t$ and we can take $r=0$. From now on we assume $C({\bf m},t)>0$.  Since $r\ge C({\bf m},t)$, there exist $p_1=1<p_2<\cdots<p_{t'}=k$ where ${t'}\le t$ so that 
  \[ m_A(\lambda_{p_i},\lambda_{p_{i+1}})=g_{\mathbf m}(p_i,p_{i+1})
  \le r,\quad 1\le i<{t'}.\]
  It suffices to find a $1$-bordering $M_1$ of $A$ so that $\sigma(M_1)\subseteq [\lambda_1,\lambda_k]$ and
  \[ m_{M_1}(\lambda_{p_i},\lambda_{p_{i+1}})\le \max\{r-1,0\},\quad 1\le i<{t'},\]
  since we can then continue inductively to find $A=M_0,M_1,\ldots,M_r=:M$, where $M_{\ell+1}$ is a $1$-bordering of $M_\ell$, so that $m_{M_r}(\lambda_{p_i},\lambda_{p_{i+1}})=0$ for $1\le i<{t'}$ and every eigenvalue of $M_r$ is in $[\lambda_1,\lambda_k]$, hence $M_r$ has only the ${t'}$ distinct eigenvalues $\{\lambda_{p_1},\ldots,\lambda_{p_{t'}}\}$.
  
  To show that such a matrix $M_1$ exists, first enumerate the open intervals $L_i:=(\lambda_{p_i},\lambda_{p_{i+1}})$ which contain at least one eigenvalue of $A$ as $L_{i_1},\ldots,L_{i_s}$, where $1\le i_1<\cdots<i_s<{t'}$, and choose $\mu_j\in \sigma(A)\cap L_{i_j}$ for $1\le j\le s$. (The assumption $C({\bf m},t)>0$ guarantees that at least one such interval exists.) Let $\mathcal{R}_0=\{\mu_1,\ldots,\mu_s\}$, and choose any set $\mathcal{N}\subseteq \{\lambda_{p_{1}},\ldots,\lambda_{p_{{t'}}}\}$ of size $s+1$ which strictly interlaces $\mathcal{R}_0$. The matrix constructed in Corollary~\ref{lem:1-border} then has the desired properties.
\qedhere\end{description}
\end{proof}

Given an $n \times n$ symmetric matrix $A$ with $\sigma(A)=\{\lambda_1^{(m_1)},\ldots,\lambda_k^{(m_k)}\}$, $\sum_{i=1}^{k} m_i=n$, the general procedure to find an $r$-bordering matrix $M$ of $A$ with $q(M) \leq t$ is shown in Algorithm~\ref{algo} below. Note that we may have some freedom in how we choose the sets ${\mathcal R}_0$ and ${\mathcal N}$ in each step. One possible choice is given in the proof of Proposition~\ref{prop:C-bordering}, and we show all possible choices for a particular case in Example~\ref{ex:options}.

\begin{algorithm}[ht!]
	\DontPrintSemicolon
	\SetKwFunction{Fstep}{Find an $r$-bordering matrix $M$ of $A$ with $q(M) \leq t$ }
	\SetAlgoLined	
\begin{enumerate}
    \item  Choose an integer $t$, $2\leq t\leq q(A)$. Define $M_{0}:=A$, $r:=C({\bf m},t)$. 
    \item For  $\ell=1,\ldots,r$, use  Corollary~\ref{lem:1-border} to construct an $(n+\ell)\times (n+\ell)$ matrix $M_{\ell}$  such that $$C({\bf m}(M_{\ell}),t)=C({\bf m}(M_{\ell-1}),t)-1.$$ Note that we may have some freedom in how we choose the sets ${\mathcal R}_0$ and ${\mathcal N}$ in each step.
    \item  The resulting $(n+r) \times (n+r)$ matrix $M:=M_{r}$ has $q(M) \leq t$.
\end{enumerate}	 
	\caption{\FuncSty{Find an $r$-bordering matrix $M$ of $A$ with $q(M) \leq t$ }}
	\label{algo}
\end{algorithm}

\section{Joins with complete graphs}\label{sec:joinswithcompletegraphs}

In this section we consider the join of two graphs and develop a technique for determining, under certain conditions, the minimum number of distinct eigenvalues for the join of a graph with a complete graph.

\subsection{Patterns and eigenvectors}
If we want a $1$-bordering of the matrix $A \in S(G)$ to produce a matrix $A' \in S(K_1 \vee G)$, then we need $U_0{\bf b}$ to have no zero entries in Theorem \ref{thm:1-bordering} above. This will happen for most choices of ${\bf b}$, unless $U_0$ contains a zero row, or equivalently, unless eigenvectors corresponding to the eigenvalues in $\mathcal R_0$ all have a zero entry in the same position. The next results consider the case $|\mathcal R_0|=1$. We call an eigenvalue of a symmetric matrix \emph{extreme} if it is the smallest or the largest eigenvalue of that matrix.

\begin{corollary}\label{cor:join-with-K1}
Suppose $G$ is a non-empty graph and there exists an $A\in S(G)$ with a nowhere zero eigenvector associated with some eigenvalue $\lambda$ of $A$. Then there exists a $1$-bordering $A'$ of $A$ in $S(K_1 \vee G)$ so that:
\begin{itemize}
\item $q(A')=q(A)+1$ if $\lambda$ is an extreme eigenvalue, 
\item $q(A')=q(A)$ if $\lambda$ is not an extreme eigenvalue,  
\item $q(A')=q(A)-1$ if $\lambda$ is simple and not an extreme eigenvalue. 
\end{itemize}
\end{corollary}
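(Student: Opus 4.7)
The plan is to apply Theorem~\ref{thm:1-bordering} directly with $\mathcal R_0=\{\lambda\}$ and $U_0=\mathbf u/\|\mathbf u\|$, where $\mathbf u$ is the given nowhere zero $\lambda$-eigenvector of $A$. Since $|\mathcal R_0|=1$, the diagonal block $D_0$ reduces to the scalar $\lambda$, and the matrix $B$ from the theorem becomes a $2\times 2$ symmetric matrix $\bigl(\begin{smallmatrix}\alpha&b\\ b&\lambda\end{smallmatrix}\bigr)$ with $b\ne0$. For any prescribed pair $\mu_1<\lambda<\mu_2$, the choices $\alpha=\mu_1+\mu_2-\lambda$ and $b=\sqrt{(\lambda-\mu_1)(\mu_2-\lambda)}$ make $\sigma(B)=\{\mu_1,\mu_2\}=:\mathcal N$, so the theorem yields a $1$-bordering
\[
A'=\begin{pmatrix}\alpha&(b/\|\mathbf u\|)\mathbf u^T\\[2pt] (b/\|\mathbf u\|)\mathbf u & A\end{pmatrix}.
\]
The new off-diagonal column $(b/\|\mathbf u\|)\mathbf u$ is nowhere zero, so $A'\in S(K_1\vee G)$ for every admissible choice of $\mu_1,\mu_2$.

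By the conclusion of Theorem~\ref{thm:1-bordering}, the spectrum of $A'$ differs from that of $A$ only in that $m_A(\lambda)$ drops by $1$ while each $m_A(\mu_i)$ rises by $1$. Thus the net change $q(A')-q(A)$ is controlled by two binary pieces of information: whether $\lambda$ disappears from $\sigma(A')$ (iff $\lambda$ was simple) and whether each chosen $\mu_i$ already lay in $\sigma(A)$ (contributing $0$) or not (contributing $+1$). Since $G$ is non-empty, $A$ is not scalar, so $q(A)\ge 2$ and $A$ genuinely possesses a second-largest and a second-smallest eigenvalue that may be used as $\mu_1$ or $\mu_2$.

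This reduces the three bullets to a short case analysis driven by the choice of $\mu_1,\mu_2$. When $\lambda$ is extreme, WLOG the largest, every $\mu_2>\lambda$ is new: if $m_A(\lambda)\ge 2$ I take $\mu_1$ to be the second-largest eigenvalue of $A$, so that $\lambda$ persists and only $\mu_2$ is new; if $m_A(\lambda)=1$ I instead take $\mu_1<\lambda$ to be any value outside $\sigma(A)$, so the disappearance of $\lambda$ is offset by two new eigenvalues. Both subcases give $q(A')=q(A)+1$. When $\lambda$ is not extreme, I place both $\mu_1,\mu_2$ in $\sigma(A)\setminus\{\lambda\}$ straddling $\lambda$, so no new distinct eigenvalues appear; this gives $q(A')=q(A)$ in general, and $q(A')=q(A)-1$ when $\lambda$ is simple.

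The step that needs the most care is the extreme-and-simple subcase. The natural reflex, by analogy with the non-extreme case, would be to pick $\mu_1\in\sigma(A)$, but that would yield only $q(A')=q(A)$. It is essential to instead choose $\mu_1\notin\sigma(A)$ so that the two new values contributed by $\mu_1$ and $\mu_2$ together outweigh the loss of $\lambda$. Once this bookkeeping point is recognized, the verification of each bullet is a routine count of distinct eigenvalues using Theorem~\ref{thm:1-bordering}.
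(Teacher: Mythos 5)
Your proposal is correct and follows essentially the same route as the paper's proof: apply Theorem~\ref{thm:1-bordering} with $\mathcal{R}_0=\{\lambda\}$ and $U_0$ the normalized nowhere zero eigenvector, take $B$ to be a $2\times 2$ matrix with eigenvalues $\mu_1<\lambda<\mu_2$ and nonzero off-diagonal entry, and then count which of $\lambda,\mu_1,\mu_2$ persist or newly appear among the distinct eigenvalues. In fact, your handling of the extreme-and-simple subcase is more careful than the paper's, which prescribes that one of $\mu_1,\mu_2$ agree with an eigenvalue of $A$ and, read literally, would then only give $q(A')=q(A)$ when $\lambda$ is both extreme and simple; your choice of $\mu_1\notin\sigma(A)$ there is exactly the adjustment needed to obtain $q(A')=q(A)+1$.
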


\begin{proof}
In Theorem \ref{thm:1-bordering} we choose $\mathcal R_0=\{\lambda\}$, $U_0\in\bR^{n\times 1}=\bR^n$ a nowhere zero eigenvector of $A$ with eigenvalue $\lambda$, and $B$ with eigenvalues $\mu_1$, $\mu_2$, satisfying $\mu_1< \lambda<\mu_2$, so that either $\mu_1$ or $\mu_2$ agrees with an eigenvalue of $A$, if $\lambda$ is an extreme eigenvalue, and so that both $\mu_1$ and $\mu_2$ are eigenvalues of $A$, if $\lambda$ is not an extreme eigenvalue of $A$. Since $U_0$ is a single column with no zero entries we get $A' \in S(K_1 \vee G)$, and since the spectrum of $A'$ can be obtained for the spectrum of $A$ by removing one multiple of $\lambda$ and increasing the multiplicity of $\mu_1$ and $\mu_2$ by~$1$, the result follows. 
\end{proof}

In Theorem \ref{thm:1-bordering} we have seen that after $1$-bordering, some eigenvectors will necessarily have a zero entry, and this has an interesting consequence for the patterns of $2$-borderings.

\begin{corollary}\label{cor:up and down}
Let $A$ be a symmetric matrix, $A'$ a $1$-bordering of $A$, and $A''$ a $1$-bordering of~$A'$.
%
If $(A'')_{1,2}\ne 0$, 
then there is an eigenvalue $\lambda$ of $A'$ so that 
$m_{A''}(\lambda)=m_{A'}(\lambda)-1=m_A(\lambda)$.
\end{corollary}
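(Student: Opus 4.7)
The plan is to argue by contradiction, showing that failure of the conclusion forces $(A'')_{1,2}=0$. Writing $A''=\begin{pmatrix}\beta & \mathbf{c}^T\\ \mathbf{c} & A'\end{pmatrix}$ with $\mathbf{c}\in\bR^{n+1}$ and $\alpha,U_0,V_0,U_1,\mathbf{v},k,\mathcal{N}$ as in Theorem~\ref{thm:1-bordering} applied to $A\to A'$, note $(A'')_{1,2}=c_1$. Since $\lambda\in\mathcal{N}$ means $m_{A'}(\lambda)=m_A(\lambda)+1$, the conclusion is equivalent to exhibiting some $\lambda\in\mathcal{N}$ with $m_{A''}(\lambda)=m_{A'}(\lambda)-1$.

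The first ingredient is the following characterization: for $\lambda\in\sigma(A')$, $m_{A''}(\lambda)=m_{A'}(\lambda)-1$ if and only if $\mathbf{c}$ is not orthogonal to the $\lambda$-eigenspace $E_\lambda(A')$. The proof is a direct split of the eigenvector equation for $A''$: an eigenvector $(x,y)^T$ at $\lambda$ with $x=0$ is exactly an element of $E_\lambda(A')\cap \mathbf{c}^\perp$, while $x\ne0$ gives $(\lambda I - A')y=-x\mathbf{c}$, so $\mathbf{c}$ must lie in $\mathrm{Range}(\lambda I-A')=E_\lambda(A')^\perp$. Thus when $\mathbf{c}\not\perp E_\lambda(A')$ the $x\ne0$ case is impossible and $m_{A''}(\lambda)=\dim(E_\lambda(A')\cap\mathbf{c}^\perp)=m_{A'}(\lambda)-1$; conversely, when $\mathbf{c}\perp E_\lambda(A')$ one has $\dim(E_\lambda(A')\cap\mathbf{c}^\perp)=m_{A'}(\lambda)$ and therefore $m_{A''}(\lambda)\ge m_{A'}(\lambda)$.

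The second ingredient extracts geometry from the explicit form of $W=\begin{pmatrix}\mathbf{v}^T & 0\\ U_0V_0 & U_1\end{pmatrix}$ in Theorem~\ref{thm:1-bordering}. The last $n-k$ columns of $W$ all have first entry $0$, so they span a subspace $V_D\subseteq e_1^\perp$; taking orthogonal complements, $e_1$ lies in the span $V_B$ of the first $k+1$ columns of $W$. Since $W^TA'W=B\oplus D_1$ with $B$ having simple spectrum $\mathcal{N}$, these first $k+1$ columns are eigenvectors of $A'$ for the distinct eigenvalues in $\mathcal{N}$, so $V_B\subseteq \sum_{\lambda\in\mathcal{N}}E_\lambda(A')$ and hence $e_1\in\sum_{\lambda\in\mathcal{N}}E_\lambda(A')$.

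Combining the two ingredients closes the argument: if no $\lambda\in\mathcal{N}$ satisfied the conclusion, then $\mathbf{c}\perp E_\lambda(A')$ for every $\lambda\in\mathcal{N}$, so $\mathbf{c}\perp \sum_{\lambda\in\mathcal{N}}E_\lambda(A')\ni e_1$, giving $c_1=\mathbf{c}^Te_1=0$, a contradiction. The step requiring the most care is the $x\ne0$ analysis in the first ingredient; once one observes $\mathrm{Range}(\lambda I-A')=E_\lambda(A')^\perp$, the characterization falls out cleanly, and the rest of the plan is just reading structure off Theorem~\ref{thm:1-bordering}.
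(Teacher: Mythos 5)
Your proof is correct and follows essentially the same route as the paper: both arguments rest on reading off from the matrix $W$ in Theorem~\ref{thm:1-bordering} that the eigenvectors of $A'$ for eigenvalues outside $\mathcal{N}$ all have zero first entry (equivalently, your observation that $e_1\in\sum_{\lambda\in\mathcal N}E_\lambda(A')$), and then connecting $(A'')_{1,2}\ne 0$ to a multiplicity drop at some $\lambda\in\mathcal N$. The only cosmetic difference is that where the paper invokes Theorem~\ref{thm:1-bordering} a second time (via $\mathcal R_0'$ and $U_0'$) to see which eigenvalues lose multiplicity under the bordering $A''$ of $A'$, you prove the equivalent characterization ``$m_{A''}(\lambda)=m_{A'}(\lambda)-1$ iff $\mathbf{c}\not\perp E_\lambda(A')$'' directly from the eigenvector equation.
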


\begin{proof}
Adopting the notation and definitions from Theorem \ref{thm:1-bordering}, recall that $W=(W_{\mathcal N}\; W_1)$ where
$$
W_{\mathcal N}=\left(
\begin{array}{c}
 {\bf v}^T \\
 U_0V_0 \\
\end{array}
\right)\quad\text{and}\quad  W_1=\left(
\begin{array}{c}
 0 \\
 U_1 \\
\end{array}
\right),$$
and $W^TA'W=D_{\mathcal N}\oplus D_1$. Hence, $A'W=W(D_{\mathcal N}\oplus D_1)$, i.e., $(A'W_{\mathcal N}\;A'W_1)=(W_{\mathcal N}D_{\mathcal N}\;W_1D_1)$, so the columns of the matrices $W_{\mathcal{N}}$ and $W_1$
are eigenvectors of $A'$ corresponding to the eigenvalues of $D_\mathcal{N}$ and $D_1$, respectively. If $\lambda$ is an eigenvalue of $A'$ which is not in $\mathcal{N}$, then the $\lambda$-eigenspace of $A'$ is contained in the column space of $W_1$, so every vector in this eigenspace has first entry equal to zero.
It follows that any
eigenvector of $A'$ with nonzero first entry must have its corresponding eigenvalue $\lambda$ in $\mathcal{N}$.

Consider now the $1$-bordering $A''$ of $A'$. Let us define ${\mathcal R}'_0$, $D_0'$, $U'_0$ and ${\bf b}'$ for this $1$-bordering, analogously as was done above for the $1$-bordering $A'$ of $A$. If $(A'')_{1,2}\ne 0$, then $(U'_0 {\bf b}')_1 \ne 0$ by the above, so the first row of $U'_0$ cannot be a zero row. Since ${U'_0}^T A' U'_0=D_0'$, this implies that there is some eigenvector of $A'$, with eigenvalue $\lambda \in {\mathcal R}'_0$, which has a nonzero first entry. Hence, by the previous paragraph, $\lambda \in \mathcal N \cap \mathcal R_0'$, and thus  
$m_{A''}(\lambda)=m_{A'}(\lambda)-1=m_A(\lambda)$. 
\end{proof}

\begin{remark}\label{rk:up-and-down}
Suppose $r\ge2$ and $A_0,A_1,\dots,A_r$ are successive $1$-borderings of a matrix $A_0\in S(G)$. If $A_r\in S(K_r\vee G)$, then by Corollary~\ref{cor:up and down}, it is necessarily the case that for $0\le s\le r-2$, there is a real number $\lambda_s$ so that $m_{A_{s+2}}(\lambda_s)=m_{A_{s+1}}(\lambda_s)-1=m_{A_s}(\lambda_s)$.
\end{remark}

In the following example we illustrate how Algorithm~\ref{algo} may be used to border a matrix achieving a small $q$ value in $3$-bordering in different ways. We also identify cases when Remark~\ref{rk:up-and-down} implies that the resulting $3$-bordering cannot be in $S(K_3\vee G)$. 

\begin{example}\label{ex:options}
 Let $A$ be a $9 \times 9$ symmetric matrix with ordered multiplicity list ${\bf m}=(1,3,3,1,1)$ and spectrum $\{1,2^{(3)},3^{(3)},4,5\}$. The goal is to find the spectra of all $3$-borderings of $A$ that have three distinct eigenvalues. Observe that $C({\bf m},3)=3$, and to achieve this goal the value of $C$ must decrease by one every time we border. Table~\ref{tab:13311} shows all possible eigenvalues we can obtain in 
 this way. We produced this table by exhaustive search.  
 
 \begin{table}[htb]
     \centering
     \begin{tabular}{|c|c|c|c|}
          \hline
          $A$& \multicolumn{3}{c|}{$\{1,{\color{red} 2^{(3)}},3^{(3)},4,5\}$} \\        
          \hline
          $1$-bordering& $\{1^{(2)},{\color{red} 2^{(2)}},3^{(4)}, 5,{\color{blue} \mu}\}$&$\{1^{(2)},{\color{red} 2^{(2)}},3^{(3)},{\color{blue}\lambda},\mu,\nu\}$&$\{1^{(2)},{\color{red} 2^{(2)}},3^{(3)},{\color{blue}\lambda},5^{(2)}\}$\\
          \hline
          $2$-bordering& $\{1^{(3)},{\color{red} 2},3^{(5)},{\color{SpringGreen4}\mu'},\mu''\}$&$\{1^{(3)},{\color{red} 2},3^{(4)},{\color{SpringGreen4} \rho},\mu^{(2)}\}$&$\{1^{(3)},{\color{red} 2},3^{(3)},{\color{SpringGreen4}\lambda'},5^{(3)}\}$\\
          \hline
          $3$-bordering& $\{1^{(4)},3^{(6)},\mu''^{(2)}\}$&$\{1^{(4)},3^{(5)},\mu^{(3)}\}$&$\{1^{(4)},3^{(4)},5^{(4)}\}$\\
          \hline
     \end{tabular}
     \caption{Red eigenvalues are the ones that are forced to have reduced  multiplicity in the next bordering, the blue ones satisfy the conclusion of Corollary~\ref{cor:up and down} for the $2$-bordering of $A$, and the green ones satisfy the same condition when we consider instead the $3$-bordering of $A$. Moreover, $\lambda,\lambda'\in [3,4]$, $\nu\in[4,5]$, $\mu, \mu',\mu''\geq 5$ and $\rho\in (3,\mu)$, are arbitrary.}
     \label{tab:13311}
 \end{table}
 We note that the construction in the proof of Proposition~\ref{prop:C-bordering} produces only the spectrum  $\{1^{(4)},3^{(6)},5^{(2)}\}$, which we obtain after $1$-bordering with spectrum $\{1^{(2)},2^{(2)},3^{(4)},4,5\}$ and  $2$-bordering with spectrum $\{1^{(3)},2,3^{(5)},4,5\}$. This example shows that there may be several options of choosing appropriate ${\mathcal N}$ and ${\mathcal R}_0$ sets in each step as we develop an $r$-bordering with the desired number of distinct eigenvalues.  

 In all three situations (corresponding to three columns of Table \ref{tab:13311}), if $A\in S(G)$, by appropriately choosing the free parameters, it is possible to satisfy the necessary conditions of Remark~\ref{rk:up-and-down} for the $3$-bordering of $A$ to be in $S(K_3 \vee G)$. However, if, for example, we choose $\lambda=4$ or $\lambda'=\lambda$ in the last column, then the conditions of the remark do not hold. 
\end{example}

\subsection{Hypercubes}\label{hyper}

In this section we explore the minimum number
of distinct eigenvalues for joins of complete graphs with a hypercube graph. 
Recall that for $t\ge 1$, the vertices of the hypercube graph $Q_t$ are the $2^t$ binary strings of length $t$, and its edges are the pairs of vertices with Hamming distance one. 
It was shown in 
\cite[Corollary~6.9]{MR3118943}
that if $Q_t$ is the hypercube graph with $t\geq 2$, then $q(Q_t)=2$. In the following, we use the matrix construction from  
\cite{MR3118943}
to demonstrate that $Q_t$ has a realization $A$ having $q(A)=2$ 
and a nowhere zero eigenvector.

\begin{theorem}\label{lemma:hypercube}
  For any two positive integers $s$ and $t$,
  $$q(K_s\vee Q_{t}) \leq 3.$$
  Moreover, if $s\le t$, then
  \[ q(K_s\vee Q_{2t+2})=3.\]
 \end{theorem}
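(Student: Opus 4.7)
The plan is to prove the two claims separately. For the upper bound $q(K_s\vee Q_t)\le 3$ (any positive integers $s,t$), the first step is to use the construction from \cite{MR3118943} to exhibit a matrix $A\in S(Q_t)$ with $q(A)=2$ and a nowhere-zero eigenvector. For $t=1$ the matrix $A=\left(\begin{smallmatrix}0&1\\1&0\end{smallmatrix}\right)$ works directly (both eigenvectors $(1,\pm 1)^T$ are nowhere zero); for $t\ge 2$, the construction from the reference needs to be unpacked and the nowhere-zero property of a specific eigenvector verified. Because $q(A)=2$, each eigenvalue of $A$ is extreme, so Corollary~\ref{cor:join-with-K1} produces a $1$-bordering $A'\in S(K_1\vee Q_t)$ with $q(A')=q(A)+1=3$. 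Extending to $s\ge 2$ is immediate from the jdup monotonicity~\eqref{eq:jdup}: since $K_s\vee Q_t=\jdup(K_{s-1}\vee Q_t,v)$ for any vertex $v$ coming from the $K_{s-1}$-side, iterating yields $q(K_s\vee Q_t)\le q(K_1\vee Q_t)\le 3$.

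For the lower bound $q(K_s\vee Q_{2t+2})\ge 3$ when $s\le t$, argue by contradiction. Suppose $M\in S(K_s\vee Q_{2t+2})$ has exactly two distinct eigenvalues $\mu_1<\mu_2$, and write
\[
M=\begin{pmatrix}C&B\\ B^T&A\end{pmatrix}
\]
with $A\in S(Q_{2t+2})$ the trailing principal submatrix and $B\in\bR^{s\times 2^{2t+2}}$ entrywise nonzero by the join structure. The identity $(M-\mu_1 I)(M-\mu_2 I)=0$ specialises in the bottom-right block to the key identity
\[
(A-\mu_1 I)(A-\mu_2 I)=-B^TB,
\]
exhibiting $B^TB$ as a positive semidefinite matrix of rank at most $s$ whose diagonal entries $(B^TB)_{ii}=\|b_i\|^2$ are strictly positive, where $b_i$ denotes the $i$-th column of $B$. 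Moreover, any two vertices $i,j$ of $Q_{2t+2}$ at Hamming distance at least $3$ have no common neighbour, so $(A^2)_{ij}=0$; combined with $A_{ij}=0$, this forces $(B^TB)_{ij}=0$ for all such pairs.

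The combinatorial core is the exhibition of $t+1$ vertices of $Q_{2t+2}$ that are pairwise at Hamming distance at least $3$. One such set consists of the all-zero string together with the characteristic vectors of the triples $T_i:=\{2i-1,2i,2i+1\}$ for $i=1,\dots,t$; these triples all lie within the first $2t+1\le 2t+2$ coordinates and any two of them share at most one position, so all pairwise Hamming distances are at least $3$. Restricting $B^TB$ to the principal submatrix indexed by this set yields a $(t+1)\times(t+1)$ diagonal matrix with positive diagonal, hence of rank $t+1$. This rank is bounded above by $\operatorname{rank}(B^TB)\le s\le t$, a contradiction. Thus $q(K_s\vee Q_{2t+2})\ge 3$, which together with the first part gives equality.

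I expect the main obstacle to be the upper bound's requirement of a nowhere-zero eigenvector in a $q=2$ realization of $Q_t$: the cited construction is explicit, but one must trace through it to verify the nowhere-zero property (most likely by exploiting the recursive tensor structure of $Q_t=Q_{t-1}\square K_2$). The lower bound, by contrast, reduces cleanly once the product-formula identity $(A-\mu_1 I)(A-\mu_2 I)=-B^TB$ is in hand, since the Hamming-distance-$\ge 3$ constraint in the hypercube pins down many off-diagonal entries of $B^TB$ to be zero, and then a short-distance code of size $t+1$ furnishes the rank contradiction.
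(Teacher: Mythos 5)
Your architecture matches the paper's for the upper bound and diverges for the lower bound, and both halves are sound modulo one deferred verification. For the upper bound you correctly reduce to producing $A\in S(Q_t)$ with $q(A)=2$ and a nowhere-zero eigenvector, then apply the extreme-eigenvalue case of Corollary~\ref{cor:join-with-K1} and the monotonicity~\eqref{eq:jdup}; this is exactly the paper's route. The step you flag as ``to be unpacked'' is, however, precisely where the paper's content lies: it takes the realization $B=\left(\begin{smallmatrix}\alpha A&\beta I\\ \beta I&-\alpha A\end{smallmatrix}\right)$ with $A^2=I$ and $\alpha^2+\beta^2=1$ from \cite{MR3118943} and exhibits the explicit eigenvector $\left(\begin{smallmatrix}(I+\alpha A)\mathbf{1}\\ \beta\mathbf{1}\end{smallmatrix}\right)$ for the eigenvalue $1$, which is nowhere zero for $\alpha$ sufficiently small; your plan closes the same way, but as written that verification is still owed. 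For the lower bound you take a genuinely different route: the paper exhibits $t+1$ independent vertices of $Q_{2t+2}$ (supports $\{2i-1,2i\}$, pairwise Hamming distance $4$) whose pairwise common neighbourhoods union to $V(K_s)$ of size $s<t+1$, and then cites \cite[Theorem~4.4]{MR3118943}; you instead re-derive the needed special case from scratch via the identity $(A-\mu_1 I)(A-\mu_2 I)=-B^TB$, the observation that Hamming distance at least $3$ forces the corresponding off-diagonal entries of both $A$ and $A^2$ (hence of $B^TB$) to vanish, and a rank comparison between a positive-diagonal $(t+1)\times(t+1)$ principal submatrix of $B^TB$ and $\operatorname{rank}(B)\le s\le t$. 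Your distance-$\ge3$ code (the zero string together with the triples $\{2i-1,2i,2i+1\}$) checks out, though the paper's distance-$4$ code of $t+1$ pairs would serve equally well. The trade-off is that your version is self-contained and makes the mechanism behind the cited theorem transparent, at the cost of re-proving it.
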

 \begin{proof}
 We will demonstrate that $Q_t$ has
 a realization $A$ having $q(A)=2$ and
 a nowhere zero eigenvector.
 Corollary~\ref{cor:join-with-K1} will then imply that $q(K_1 \vee Q_t)\leq 3$ and so the result follows from the inequality~(\ref{eq:jdup}).
 As observed in \cite{MR3118943}, for any nonzero $\alpha$ and $\beta$ with
 $\alpha^2 +\beta^2=1$, $Q_t$ has a realization 
 \[ B=\left( \begin{array}{cc}
\alpha A & \beta I\\ 
 \beta I & -\alpha A
 \end{array} \right) \]
 such that $A^2=I$ and $q(B)=2$. The vector 
\[\begin{pmatrix} 
(I+\alpha A) \mathbf{1} \\ \beta \mathbf{1}
\end{pmatrix}\] 
with $\mathbf{1}$ representing the
 all ones vector, will be a nowhere zero eigenvector of $B$ with eigenvalue 1 for any $\alpha$ sufficiently small.

 

   The second part of the statement is a generalization of~\cite[Proposition~5.1]{MR3904092}. It uses~\cite[Theorem~1.9]{MR3904092}, which is a small correction of~\cite[Theorem~4.4]{MR3118943}.
      For $i=1,2,\ldots,t+1$, consider the vertices of the hypercube $Q_t$ given by the binary strings $v_i=00\cdots01100\cdots0$, with the two ones in  positions $2i-1$ and $2i$. Then $\{v_1,\ldots,v_{t+1}\}$ is a set of $t+1$ independent vertices in $K_s\vee  Q_{2t+2}$, and $N(v_i)\cap N(v_j)=V(K_s)$ for $i\ne j$. Therefore
   $$\left\vert \bigcup_{i \ne j} N(v_i)\cap N(v_j)\right\vert =s < t+1,$$
   hence $q(K_s\vee Q_{2t+2}) \geq 3$ by~\cite[Theorem~4.4]{MR3118943}.
 \end{proof}
 

 By Theorem \ref{thm:general_upper_bound},  if $s$ is chosen sufficiently large, then $q(K_s\vee Q_t)=2$.  Thus, in light of Theorem~\ref{lemma:hypercube}, 
 and the fact that $q(K_s\vee Q_t)$ is a non-increasing function of $s$ as
 per Equation~(\ref{eq:jdup}), it is natural to ask the following question: What is the minimum $s$ for which $q(K_s\vee Q_t)=2$?


 


\subsection{Cycles}\label{sec:applicationjoinscycles}

Given $A\in S(H)$ and a graph $G$, let $S(G\vee A)$ be the set of all matrices $X\in S(G\vee H)$ so that $X[H]=A$, and let $q(G\vee A)$ be the minimum $q(X)$ over all such matrices~$X$. Note that $q(G\vee A)\geq q(G\vee H)$. Suppose $A$ has ordered multiplicity list ${\bf m}={\bf m}(A)$. Given a number $t\ge2$ and a graph $G$, we want to determine whether or not $q(G\vee A)\le t$. By Proposition~\ref{prop:C-bordering}, a necessary condition is that \[C({\bf m},t)\le |G|.\]
In Section \ref{sec:limitations} we will show that this condition is not sufficient in general, since it may happen that none of the $|G|$-borderings guaranteed by Proposition~\ref{prop:C-bordering} has the correct graph, $G\vee H$, where for any $n\times n$ symmetric matrix $A=(a_{ij})$, $H=G(A)$ is defined as the graph on $n$ vertices with edges $\{i,j\}$ whenever $i \neq j$ and $a_{ij}\neq 0$.
In fact, it is not generally sufficient even in the case that $G$ is a complete graph. Despite this, we provide examples when the procedure from Section \ref{sec:border} is applied successfully.

 

Note that the necessary condition above may be written as 
\begin{equation}\label{eq:q(HvA)>=C} q(G\vee A)\ge \min\{t\ge2 : C({\bf m}(A),t)\le |G|\}.\end{equation}



Turning to cycles, it is known by~\cite[Theorem~3.4]{levene2021paths} that $q(K_{2k-2}\vee C_{2k})=2$. Next, we use the following result on the inverse eigenvalue problem for cycles to determine the minimum number of eigenvalues allowed for joins of complete graphs with even cycles.  

 
\begin{proposition}\label{IEPG cycles}(IEPG for cycles~\cite{MR2549049}).
Nonincreasing real numbers $\lambda_1\geq \cdots \geq \lambda_n$ are the eigenvalues of $A\in S(C_n)$ if and only if either
$$\lambda_1\geq \lambda_2>\lambda_3\geq \lambda_4 > \lambda_5\geq\cdots$$
or
$$\lambda_1> \lambda_2\geq \lambda_3> \lambda_4 \geq \lambda_5>\cdots.$$
Hence, if $k\geq 2$, $q(C_{2k})=k$ and $M(C_{2k})=2$.
\end{proposition}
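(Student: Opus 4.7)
The plan is to prove the interlacing characterization in both directions and then extract $q(C_{2k}) = k$ and $M(C_{2k}) = 2$ from it.

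For necessity, let $A \in S(C_n)$ with vertices labeled consecutively along the cycle. For each $j$, the principal submatrix $A[\{1,\dots,n\}\setminus\{j\}]$ lies in $S(P_{n-1})$, hence is an irreducible tridiagonal symmetric (Jacobi) matrix, which is classically known to have $n-1$ \emph{distinct} eigenvalues. Writing the eigenvalues of $A[\{2,\dots,n\}]$ as $\mu_1 > \cdots > \mu_{n-1}$, Cauchy's interlacing gives $\lambda_i \ge \mu_i \ge \lambda_{i+1}$ with the $\mu_i$ strict, so immediately $m_A(\lambda) \le 2$ for every real $\lambda$. The more delicate task is the \emph{parity consistency}: double eigenvalues must occur at $(2i-1,2i)$-type positions for every $i$, or at $(2i,2i+1)$-type positions for every $i$, but not at a mixture. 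I would establish this by expanding $\det(xI-A)$ along row $1$, which produces an identity of the shape
\[ p_A(x) = (x - a_{11})\, p_{A[\{2,\ldots,n\}]}(x) - a_{12}^2\, p_{A[\{3,\ldots,n\}]}(x) - a_{1n}^2\, p_{A[\{2,\ldots,n-1\}]}(x) + \varepsilon \cdot 2 a_{12} a_{23} \cdots a_{n-1,n} a_{1n}, \]
where $\varepsilon \in \{\pm 1\}$ and the three path-polynomials on the right have mutually strictly interlacing roots. The ``cycle term'' is a nonzero constant in $x$, so at a hypothetical double root $\lambda$ of $p_A$ the conditions $p_A(\lambda) = p_A'(\lambda) = 0$ together with the signs of the path-polynomials at $\lambda$ (pinned down by their own interlacing) force the parity of double-root positions to be consistent throughout the spectrum.

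For sufficiency, given a nonincreasing target sequence satisfying one of the two patterns, I would construct $A \in S(C_n)$ realizing it. First apply Hochstadt's inverse spectral theorem for Jacobi matrices (which realizes any strictly interlacing spectral pair on a tridiagonal block) to build an $(n-1)\times(n-1)$ tridiagonal realization for a suitable ``reduced'' spectrum; then adjoin one row and one column to close the cycle, using the three free parameters (one new diagonal entry, one new path-end entry, and one corner entry) to match the three remaining spectral constraints.

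Finally, the characterization gives $M(C_{2k}) \le 2$, and the sequence
\[ \lambda_1 = \lambda_2 > \lambda_3 = \lambda_4 > \cdots > \lambda_{2k-1} = \lambda_{2k} \]
satisfies the pattern and is realized by sufficiency, so $M(C_{2k}) = 2$ and $q(C_{2k}) \le k$; the matching lower bound $q(C_{2k}) \ge \lceil |C_{2k}|/M(C_{2k}) \rceil = k$ was recorded in the introduction. The main obstacle is the parity-consistency step in necessity: Cauchy's interlacing alone bounds multiplicities by $2$ but does not distinguish the two admissible parities of double eigenvalues, so one must exploit the rigid cyclic structure via the cofactor identity above (or another structural input).
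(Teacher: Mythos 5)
The first thing to say is that the paper does not prove this proposition: it is stated with a citation and used as a known external result (it is essentially Ferguson's theorem on periodic Jacobi matrices), and the only part the authors rely on deriving themselves is the final sentence, which follows from the characterization exactly as in your last paragraph. So you are attempting to reprove a substantial theorem from the literature, and your attempt should be judged on its own.

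Your necessity argument up to $m_A(\lambda)\le 2$ is correct and standard (deleting any vertex leaves a matrix in $S(P_{n-1})$, an irreducible tridiagonal matrix with simple spectrum, and Cauchy interlacing does the rest); your concluding derivation of $q(C_{2k})=k$ and $M(C_{2k})=2$ from the characterization is also correct. The two genuine gaps are the parity-consistency step and sufficiency. For parity consistency, the cofactor identity you write is the right starting point, but the assertion that ``the signs of the path-polynomials at $\lambda$ \dots force the parity'' is not an argument: what must be shown is that the number of eigenvalues of $A$, counted with multiplicity, lying strictly between any two double eigenvalues is even. The classical route is to note that $p_A(x)=g(x)-2\beta$, where $g$ depends only on the squares of the off-diagonal entries and $\beta=\pm a_{12}a_{23}\cdots a_{n1}$, and then to establish the band structure of the associated discriminant (monotonicity between consecutive solutions of $\Delta=\pm2$); none of that is present in your sketch, and $p_A(\lambda)=p_A'(\lambda)=0$ plus sign information at a \emph{single} double root cannot by itself link the parities of \emph{different} double roots. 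For sufficiency, the bordering plan does not work as stated: once the $(n-1)\times(n-1)$ tridiagonal block is fixed, the added row is forced to have pattern $(a_{11},a_{12},0,\dots,0,a_{1n})$, so you have three free parameters but must satisfy $n$ spectral conditions relative to the block's eigenvalues; moreover the ``reduced'' spectrum handed to Hochstadt's theorem is itself an unknown of the inverse problem. This is precisely why Ferguson's construction is a genuine algorithm rather than a one-step bordering. In short: correct skeleton and correct consequences, but the two hard steps of the cited theorem are not actually proved.
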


Observe that if $\lambda$ is a multiple eigenvalue of $A \in S(C_n)$, then the multiplicity of $\lambda$ is two and there exists a nowhere zero eigenvector for $\lambda$ associated with $A$. If the latter did not hold then every eigenvector ${\bf x}$ for $\lambda$ would satisfy ${\bf x}_i=0$ for some $i=1,2,\ldots, n$. In this case $\lambda$ is a multiple eigenvalue for the principal submatrix of $A$ obtained by deleting row and column $i$. However, this submatrix lies in $S(P_{n-1})$, and can only possess simple eigenvalues.

\begin{theorem} If $k\geq 2$ then 
$q(K_1\vee C_{2k})=k.$
\end{theorem}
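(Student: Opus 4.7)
The proof breaks into matching upper and lower bounds, both of which exploit the structural information about $S(C_{2k})$ packaged in Proposition~\ref{IEPG cycles} and the observation preceding the theorem that multiple eigenvalues of matrices in $S(C_n)$ admit nowhere zero eigenvectors.

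For the lower bound, fix an arbitrary $X\in S(K_1\vee C_{2k})$ and form the principal submatrix $A:=X[V(C_{2k})]\in S(C_{2k})$, so $X$ is a $1$-bordering of $A$. Sort the eigenvalues in decreasing order, $\lambda_1\ge\lambda_2\ge\cdots\ge\lambda_{2k}$ for $A$ and $\mu_1\ge\mu_2\ge\cdots\ge\mu_{2k+1}$ for $X$, and invoke Cauchy's interlacing inequalities $\mu_i\ge\lambda_i\ge\mu_{i+1}$. Proposition~\ref{IEPG cycles} forces $(\lambda_i)$ to satisfy either the Pattern~1 strict inequalities $\lambda_{2i}>\lambda_{2i+1}$ for $i=1,\dots,k-1$ or the Pattern~2 strict inequalities $\lambda_{2i-1}>\lambda_{2i}$ for $i=1,\dots,k$. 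For each forced strict $\lambda_j>\lambda_{j+1}$, interlacing yields $\mu_j\ge\lambda_j>\lambda_{j+1}\ge\mu_{j+2}$, so $\mu_j>\mu_{j+2}$, which must produce a strict separation in the $\mu$'s either at position $j$ or at position $j+1$. The pairs $\{j,j+1\}$ arising in each pattern are pairwise disjoint, so we collect at least $k-1$ distinct strict separations in $\sigma(X)$, concluding $q(X)\ge k$.

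For the upper bound when $k\ge 3$, I will use Proposition~\ref{IEPG cycles} to pick $A\in S(C_{2k})$ with ordered multiplicity list $(2,2,\dots,2)$, whose $k$ distinct eigenvalues are all multiple and hence, by the observation, admit nowhere zero eigenvectors. Some such eigenvalue is non-extreme (since $k\ge 3$), and applying Corollary~\ref{cor:join-with-K1} at that eigenvalue produces $A'\in S(K_1\vee C_{2k})$ with $q(A')=q(A)=k$. The case $k=2$ needs a different construction because every eigenvalue in a list $(2,2)$ is extreme, so Corollary~\ref{cor:join-with-K1} would only give $q(A')=q(A)+1=3$. I will instead choose $A\in S(C_4)$ realizing the ordered multiplicity list $(2,1,1)$ (Pattern~1 with the smallest slot paired and the largest slot split), so that the middle simple eigenvalue is non-extreme; Corollary~\ref{cor:join-with-K1} in the simple non-extreme case then produces $A'\in S(K_1\vee C_4)$ with $q(A')=q(A)-1=2$. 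The main obstacle is confirming that such an $A$ can be chosen so that the middle simple eigenvector is nowhere zero, which I plan to handle by starting from a matrix in $S(C_4)$ realizing $(2,2)$ and applying a small generic diagonal perturbation; first-order eigenvector perturbation theory at the degenerate eigenvalue then shows that, generically, the two split simple eigenvectors acquire no zero coordinates, completing the construction.
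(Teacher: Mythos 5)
Your lower bound is correct and takes a somewhat different route from the paper's. You count forced strict descents: each strict inequality $\lambda_j>\lambda_{j+1}$ guaranteed by Proposition~\ref{IEPG cycles} forces $\mu_j>\mu_{j+2}$, and the disjointness of the index pairs $\{j,j+1\}$ yields at least $k-1$ descents in $\sigma(X)$, hence $q(X)\ge k$. The paper instead argues through multiplicities: interlacing caps the multiplicities of $X$ at $3$, and the parity constraint on $m_A(\lambda_i,\lambda_j)$ between two double eigenvalues of $A$ forces a simple eigenvalue of $X$ between any two triple eigenvalues, after which a count gives the bound. Both rest on the same two ingredients (Cauchy interlacing and the alternating strictness in Proposition~\ref{IEPG cycles}); yours is arguably the more direct. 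Your upper bound for $k\ge3$ is exactly the paper's: a $(2,2,\dots,2)$ realization, a non-extreme double eigenvalue with a nowhere zero eigenvector, and Corollary~\ref{cor:join-with-K1}.

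The genuine gap is in your $k=2$ case. You are right to flag it: a $(2,2)$ realization of $C_4$ has only extreme eigenvalues, so the generic argument does not apply there (the paper's own proof does not separate this case out, so your instinct is sound, and the statement itself is still true --- note $K_1\vee C_4=K_{1,2,2}$ admits a rank-two orthogonal projection realization). However, your construction of an $A\in S(C_4)$ with ordered multiplicity list $(2,1,1)$ does not work as described: a \emph{generic} diagonal perturbation of a matrix realizing $(2,2)$ splits \emph{both} double eigenvalues, since for generic diagonal $D$ the compression of $D$ to each two-dimensional eigenspace fails to be a scalar matrix. You therefore land on the list $(1,1,1,1)$, where Corollary~\ref{cor:join-with-K1} can only bring $q$ down to $3$. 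Keeping one eigenvalue exactly double while splitting the other means staying on the stratum of matrices in $S(C_4)$ with a repeated eigenvalue, which a generic perturbation leaves immediately, and first-order perturbation theory cannot certify that an eigenvalue remains exactly degenerate after the perturbation. To close the gap you need either an explicit $A\in S(C_4)$ with spectrum $\{\mu^{(2)},\lambda,\nu\}$, $\mu<\lambda<\nu$, together with a nowhere zero $\lambda$-eigenvector, or a genericity argument carried out inside the variety of such matrices; as written, the upper bound for $k=2$ is not established.
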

\begin{proof}
To obtain the upper bound $q(K_1\vee C_{2k}) \leq k$, use Proposition~\ref{IEPG cycles} to choose a matrix $A\in S(C_{2k})$ with multiplicity list $(2,2,\dots,2)$, choose a non-extreme eigenvalue of $A$ and a nowhere zero eigenvector and apply Corollary~\ref{cor:join-with-K1}.

To show the lower bound, assume that $M \in S(K_1\vee C_{2k})$ has eigenvalues $\mu_1 \leq \mu_2 \leq \cdots \leq \mu_{2k+1}$ and that $A$ is the submatrix corresponding to $C_{2k}$ and has eigenvalues $\lambda_1 \leq \cdots \leq \lambda_{2k}$. By Proposition \ref{IEPG cycles}, we have that the maximum multiplicity of an eigenvalue $\lambda_i$ is $2$ and furthermore, if there are eigenvalues $\lambda_i$ and $\lambda_j$ with multiplicity $2$ then $m_A(\lambda_i, \lambda_j)$ must be even. By eigenvalue interlacing we have that the maximum multiplicity of any eigenvalue of $M$ is $3$. We claim that if $\mu_i$ and $\mu_j$ each have multiplicity $3$, then there must be an eigenvalue of multiplicity $1$ between them, and the lower bound follows once we show this.

By way of contradiction, assume that there is some pair of eigenvalues with multiplicity $3$ and $j$ distinct eigenvalues between them, each with multiplicity $2$ (with the possibility that $j$ is $0$). That is, we have 
$$\mu_i = \mu_{i+1} = \mu_{i+2} < \cdots < \mu_{i+2+2j+1} = \mu_{i+2+2j+2} = \mu_{i+2+2j+3}.$$
From eigenvalue interlacing we must have $\lambda_i = \lambda_{i+1}$ and $\lambda_{i+2j+3} = \lambda_{i+2j+4}$. Hence it follows that both $\lambda_{i+1}$ and $\lambda_{i+2j+3}$ have multiplicity $2$ and $m_A(\lambda_{i+1}, \lambda_{i+2j+3}) = 2j+1$ is odd, a contradiction.
\end{proof}

 As we saw in Example~\ref{ex:options}, we have to be careful about the choice of $1$-bordering of $A$ in order to ensure that a subsequent $2$-bordering of $A$ has the desired pattern. As another illustration of this issue, observe that if an eigenvalue $\lambda$ of $A \in S(C_6)$ has multiplicity $2$ and multiplicity $1$ for a $1$-bordering $A'$ of $A$, then eigenvectors of $\lambda$ for $A'$ will not be nowhere zero---since the interlacing is not strict, an entry of the eigenvector for $A'$ is 0 (see \cite[Theorem 4.3.17]{HJ1}). 
 This shows that if we apply Algorithm \ref{algo} starting with $A \in S(C_6)$ with multiplicity list $(2,2,2)$ to produce a $2$-bordering $A''$ with $q(A'')=2$, then $A''\not\in S(K_2 \vee C_6)$. In the next example we show that starting with a matrix $A \in S(C_6)$ with a different multiplicity list, $q(A'')=2$ can still be reached for some $A''\in S(K_2 \vee C_6)$.

 \begin{example} Let
   \[A=\left(
\begin{array}{rrrrrr}
 1 & 1 & 0 & 0 & 0 & -1 \\
 1 & -1 & 1 & 0 & 0 & 0 \\
 0 & 1 & 1 & 1 & 0 & 0 \\
 0 & 0 & 1 & -1 & 1 & 0 \\
 0 & 0 & 0 & 1 & 1 & 1 \\
 -1 & 0 & 0 & 0 & 1 & -1 \\
\end{array}
\right)\in S(C_6),\quad U_0=\frac1{\sqrt 3}\left(
\begin{array}{rr}
 1 & 0 \\
 0 & 1 \\
 -1 & 0 \\
 0 & -1 \\
 1 & 0 \\
 0 & 1 \\
\end{array}
\right).\]
Then $\sigma(A)=\{(-2)^{(2)},-1,1,2^{(2)}\}$. Let $\mathcal R_0=\{-1,1\}$, and observe that $U_0^TAU_0=\left(
\begin{smallmatrix}
  1&0\\0&-1
\end{smallmatrix}\right)$, corresponding to the setup of Theorem~\ref{thm:1-bordering}.
Choose any $t\in (-1,1)$ and let
$\mathcal N=\{-2,t,2\}$. Following Corollary~\ref{lem:1-border} and~\cite[equation (2.4)]{boley-golub}, we find
\[
B= \left(
\begin{array}{rrr}
 t & \sqrt{3}u & \sqrt{3}v \\
 \sqrt{3}u &1 & 0 \\
 \sqrt{3}v & 0 & -1 \\
\end{array}
\right)\]where
\[ u=\sqrt{(1-t)/2}\quad\text{and}\quad v=\sqrt{(1+t)/2},\]
and
\[ A'=\left(
\begin{array}{rrrrrrr}
 t & u & v & -u & -v & u & v \\
 u & 1 & 1 & 0 & 0 & 0 & -1 \\
 v & 1 & -1 & 1 & 0 & 0 & 0 \\
 -u & 0 & 1 & 1 & 1 & 0 & 0 \\
 -v & 0 & 0 & 1 & -1 & 1 & 0 \\
 u & 0 & 0 & 0 & 1 & 1 & 1 \\
 v & -1 & 0 & 0 & 0 & 1 & -1 \\
\end{array}
\right)\in S(K_1\vee C_6)\]
with $\sigma(A')=\{(-2)^{(3)},t,2^{(2)}\}$.

Repeating the construction, choosing $\mathcal R_0'=\{t\}$ and $\mathcal N'=\{-2,2\}$, we obtain
\[ A''=\left(
\begin{array}{rrrrrrrr}
  -t & \sqrt{1-t^2} & -v & u & v & -u & -v & u \\[3pt]
 \sqrt{1-t^2} & t & u & v & -u & -v & u & v \\
 -v & u & 1 & 1 & 0 & 0 & 0 & -1 \\
 u & v & 1 & -1 & 1 & 0 & 0 & 0 \\
 v & -u & 0 & 1 & 1 & 1 & 0 & 0 \\
 -u & -v & 0 & 0 & 1 & -1 & 1 & 0 \\
 -v & u & 0 & 0 & 0 & 1 & 1 & 1 \\
 u & v & -1 & 0 & 0 & 0 & 1 & -1 \\
\end{array}
\right)\in S(K_2\vee C_6)\]
with $\sigma(A'')=\{(-2)^{(4)},2^{(4)}\}$ and thus $q(K_2\vee C_6)=2$.
 \end{example}
 
 \begin{example}
 Using the Jacobi-Ferguson algorithm \cite{fixme}, we can construct numerical matrices $A\in S(C_{10})$ with spectrum $\{(-6)^{(2)},-4,-2,0^{(2)},4,2,6^{(2)}\}$, and hence find numerical matrices $A''\in S(K_2\vee C_{10})$ with spectrum $\{(-6)^{(4)},0^{(4)},6^{(4)}\}$ and thus $q(K_2\vee C_{10})=3$. One such numerical matrix is:

{\tiny{
 \[A''=\left(
\begin{smallmatrix}
 0 & -1.9720 & -0.11321 & -0.40399 & -2.4521 & -1.3819 & 0.0061884 & 0.00028437 & -0.0036489 & 2.1264 & -2.9646 & 1.3043 \\
 -1.9720 & 0 & -2.2195 & -2.0495 & 2.2752 & -0.83772 & 0.0080390 & 0.005574 & -0.018511 & -1.9731 & -1.7971 & 1.6944 \\
 -0.11321 & -2.2195 & 0 & 3.2468 & 0 & 0 & 0 & 0 & 0 & 0 & 0 & 3.6901 \\
 -0.40399 & -2.0495 & 3.2468 & 0 & 3.6175 & 0 & 0 & 0 & 0 & 0 & 0 & 0 \\
 -2.4521 & 2.2752 & 0 & 3.6175 & 0 & 1.5399 & 0 & 0 & 0 & 0 & 0 & 0 \\
 -1.3819 & -0.83772 & 0 & 0 & 1.5399 & 0 & 0.010306 & 0 & 0 & 0 & 0 & 0 \\
 0.0061884 & 0.0080390 & 0 & 0 & 0 & 0.010306 & 0 & 5.4891 & 0 & 0 & 0 & 0 \\
 0.00028437& 0.005574 & 0 & 0 & 0 & 0 & 5.4891 & 0 & 2.4227 & 0 & 0 & 0 \\
 -0.0036489 & -0.018511 & 0 & 0 & 0 & 0 & 0 & 2.4227 & 0 & 0.013409 & 0 & 0 \\
 2.1264 & -1.9731 & 0 & 0 & 0 & 0 & 0 & 0 & 0.013409 & 0 & 2.9999 & 0 \\
 -2.9646 & -1.7971 & 0 & 0 & 0 & 0 & 0 & 0 & 0 & 2.9999 & 0 & -2.7171 \\
 1.3043 & 1.6944 & 3.6901 & 0 & 0 & 0 & 0 & 0 & 0 & 0 & -2.7171 & 0 \\
\end{smallmatrix}
\right).\]
}}
  \end{example}
 

\section{Limitations of Algorithm~\ref{algo} for graph joins}\label{sec:limitations}

In this section we show that the condition $C({\bf m}(A),t)\le r$ from Proposition \ref{prop:C-bordering} is not generally sufficient in the case $G=K_r$ for the existence of a matrix with at most $t$ eigenvalues in $S(G\vee A)$.

\begin{proposition}\label{prop:monotone}
 Suppose $t\ge2$ and $A_1,\ldots,A_k$ are successive $1$-borderings of a symmetric matrix $A=A_0$, and ${\bf m}(A)=(m_1,k,m_2,k,\ldots,k,m_t)$ where $m_j\ge k\ge t$ for each $j$, and $q(A_k)=t$. Then \[{\bf m}(A_j)=(m_1+j,k-j,m_2+j,k-j,\ldots,k-j,m_t+j),\quad j=0,1,\ldots,k.\]
\end{proposition}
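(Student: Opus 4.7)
My plan is to proceed by induction on $j$, with Proposition~\ref{prop:C-bordering} serving as the main constraint at each step. Since $A_k$ is a $(k-j)$-bordering of $A_j$ with $q(A_k)\leq t$, that proposition gives $C({\bf m}(A_j),t)\leq k-j$ for every $j\in\{0,1,\dots,k\}$. Moreover, a direct calculation confirms that the multiplicity list proposed in the statement achieves $C({\bf m},t)=k-j$ precisely, attained by the partition $p_i=2i-1$ for $i=1,\dots,t$: any other partition would place some odd-position entry $m_s+j\geq k+j>k-j$ inside a gap, so no smaller value of $C$ is possible. Hence the constraint $C({\bf m}(A_j),t)\leq k-j$ is tight at every stage.

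The base case $j=0$ is the hypothesis. For the inductive step, suppose ${\bf m}(A_j)$ has the claimed form, and use Theorem~\ref{thm:1-bordering} to describe the 1-bordering $A_{j+1}$ of $A_j$ via disjoint sets $\mathcal{R}_0\subseteq\sigma(A_j)=\{\lambda_1,\mu_1,\dots,\mu_{t-1},\lambda_t\}$ and $\mathcal{N}$ satisfying $|\mathcal{N}|=|\mathcal{R}_0|+1$ and strictly interlacing~$\mathcal{R}_0$. The crux is to leverage $C({\bf m}(A_{j+1}),t)\leq k-j-1$ (again by Proposition~\ref{prop:C-bordering}) in order to force $\mathcal{R}_0=\{\mu_1,\dots,\mu_{t-1}\}$ and $\mathcal{N}=\{\lambda_1,\dots,\lambda_t\}$; this choice immediately yields ${\bf m}(A_{j+1})=(m_1{+}j{+}1,k{-}j{-}1,\dots,k{-}j{-}1,m_t{+}j{+}1)$, completing the induction.

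To carry out this forcing, observe that in $A_{j+1}$ each $\lambda_s$ retains multiplicity at least $m_s+j-1\geq k+j-1$, which strictly exceeds $k-j-1$ whenever $j\geq 1$. Consequently each $\lambda_s$ must occupy a boundary position in any partition witnessing $C\leq k-j-1$; since there are only $t$ available boundaries and the $\lambda_s$'s already number~$t$, the boundaries are precisely $\lambda_1,\dots,\lambda_t$. This configuration rules out new eigenvalues in $\mathcal{N}$ (a new eigenvalue below $\lambda_1$ would displace the mandatory boundary $p_1=1$, and any other interior new eigenvalue would inflate a gap beyond $k-j-1$), prohibits any undecreased $\mu_s$ (otherwise its multiplicity $k-j$ exceeds the gap budget and the boundaries are already taken), and forbids $\lambda_s\in\mathcal{R}_0$ (by strict interlacing this would enlarge $|\mathcal{N}|$ and, because the intervals immediately adjacent to $\lambda_s$ contain no element of $\sigma(A_j)$, demand new eigenvalues that were just excluded).

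The main obstacle I anticipate lies in the borderline case $j=0$ combined with $m_s=k$ for some $s$: here $\lambda_s$'s multiplicity in $A_1$ could drop to exactly $k-1=k-j-1$ if $\lambda_s\in\mathcal{R}_0$, so the easy inequality above does not immediately exclude $\lambda_s$ from a gap. Handling this requires a direct case analysis of those $\mathcal{R}_0$ containing some $\lambda_s$: in every such configuration, strict interlacing forces at least one element of $\mathcal{N}$ into an interval of the real line that contains no element of $\sigma(A_j)$, thereby introducing at least one new eigenvalue of multiplicity~$1$ into ${\bf m}(A_{j+1})$. A finite enumeration of the resulting multiplicity lists then confirms that $C({\bf m}(A_{j+1}),t)>k-j-1$ in every sub-case, completing the forcing step and the induction.
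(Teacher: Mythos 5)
Your strategy differs genuinely from the paper's: you induct on $j$ and try to force, via Theorem~\ref{thm:1-bordering} together with the constraint $C({\bf m}(A_{j+1}),t)\le k-j-1$ from Proposition~\ref{prop:C-bordering}, that each single bordering must take $\mathcal{R}_0$ to be exactly the set of ``small'' eigenvalues and $\mathcal{N}$ exactly the set of ``large'' ones. The paper instead argues globally and non-inductively: a counting argument first pins down that the $t$ distinct eigenvalues of $A_k$ are the odd-indexed eigenvalues of $A_0$, and then Lemma~\ref{lem:bordering-interlacing} sandwiches each open-interval multiplicity $m_{A_j}(\lambda_i,\lambda_{i+1})$ between $m_{A_0}(\lambda_i,\lambda_{i+1})-j=k-j$ from below and $m_{A_k}(\lambda_i,\lambda_{i+1})+(k-j)=k-j$ from above. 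Your inductive step for $j\ge1$ is essentially sound: since $m_s+j-1>k-j-1$ when $j\ge1$, all $t$ boundaries are forced onto the large eigenvalues, each gap is then exactly filled by the corresponding $\mu_i$ with multiplicity $k-j-1$, and the cardinality constraint $|\mathcal{N}|=|\mathcal{R}_0|+1$ finishes the forcing.

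The genuine gap is the base step $A_0\to A_1$ when some $m_s=k$, and it is not repairable by the mechanism you describe. You assert that every configuration with some $\lambda_s\in\mathcal{R}_0$ forces a new simple eigenvalue into an interval containing no eigenvalue of $A_0$, and that ``a finite enumeration'' of the resulting lists then gives $C>k-1$. The first assertion is false in general: with $t=3$, ${\bf m}(A_0)=(k,k,k,k,k)$ and eigenvalues $\nu_1<\eta_1<\nu_2<\eta_2<\nu_3$, the choice $\mathcal{R}_0=\{\nu_2\}$, $\mathcal{N}=\{\eta_1,\eta_2\}$ is a legitimate strictly interlacing pair that introduces no new eigenvalues whatsoever; it is excluded only because the resulting list $(k,k+1,k-1,k+1,k)$ has $C(\cdot,3)=k+1>k-1$, which is a different mechanism (inflation of the $\eta_i$) from the one you invoke. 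Moreover, the enumeration is not finite uniformly in $t$ and in the pattern of indices with $m_s=k$: the number of admissible pairs $(\mathcal{R}_0,\mathcal{N})$ grows with $t$, so a uniform argument --- for instance a counting bound of the kind the paper uses, namely that the $t$ boundary eigenvalues of $A_1$ must carry total multiplicity at least $\sum_i m_i+t$ while each can gain at most $1$ over its multiplicity in $A_0$ --- is required to dispose of all of them at once. As written, the proof is incomplete at exactly the step you flag, and the sketched repair does not work.
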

\begin{proof}
  Let $\mu_1<\cdots<\mu_{2t-1}$ be the distinct eigenvalues of $A_0$ and $\lambda_1<\cdots<\lambda_t$ the distinct eigenvalues of $A_k$. By eigenvalue interlacing, every eigenvalue of $A_j$ is in the closed interval $[\lambda_1,\lambda_t]$. Moreover, by Lemma~\ref{lem:bordering-interlacing}, for $i=1,\ldots,t-1$ and $j=0,\ldots,k$, we have \[m_{A_j}(\lambda_i,\lambda_{i+1})\ge m_{A_0}(\lambda_i,\lambda_{i+1})-j.\] In particular, $0=m_{A_k}(\lambda_i,\lambda_{i+1})\ge m_{A_0}(\lambda_i,\lambda_{i+1})-k$, so \[m_{A_0}(\lambda_i,\lambda_{i+1})\le k.\] Let $S=\{\mu_1,\ldots,\mu_{2t-1}\}\setminus\{\lambda_1,\ldots,\lambda_t\}$. Then $|S|\ge 2t-1-t=t-1$, and each eigenvalue in $S$ has multiplicity at least $k$ in $A_0$ by hypothesis, so
  \[
  k(t-1)\le k|S| \le \sum_{i=1}^{t-1} m_{A_0}(\lambda_i,\lambda_{i+1})\le k(t-1).
  \]
  Hence, $|S|=t-1$, so $\{\lambda_1,\ldots,\lambda_t\}\subseteq\{\mu_1,\ldots,\mu_{2t-1}\}$. Since $\mu_1,\mu_{2t-1}\in [\lambda_1,\lambda_t]$, this forces $\mu_1=\lambda_1$ and $\mu_{2t-1}=\lambda_t$. If $\lambda_i=\mu_j$ and $\lambda_{i+1}=\mu_l$ where $l>j+2$, then $m_{A_0}(\lambda_i,\lambda_{i+1})\ge 2k$, a contradiction. It follows that $\lambda_i=\mu_{2i-1}$ for $1\le i\le t$.
  
  Hence, $m_{A_0}(\lambda_i,\lambda_{i+1})=k$ for each $i$, and the bound we observed above becomes 
  \[ k-j\le m_{A_j}(\lambda_i,\lambda_{i+1}).\]
  Since $A_k$ is a $(k-j)$-bordering of $A_j$, by Lemma~\ref{lem:bordering-interlacing} we also have
  \[ m_{A_j}(\lambda_i,\lambda_{i+1})\le m_{A_k}(\lambda_i,\lambda_{i+1})+k-j=k-j,\]
  so $m_{A_j}(\lambda_i,\lambda_{i+1})=k-j$. Moreover, by eigenvalue interlacing, $k-j\le m_{A_j}(\mu_{2i})\le m_{A_j}(\lambda_i,\lambda_{i+1})=k-j$, so we have equality. Hence, the multiplicity of $\mu_{2i}$ as an eigenvalue of $A_j$ is $k-j$, and no other real number in $(\lambda_i,\lambda_{i+1})$ is an eigenvalue of $A_j$. It follows that every eigenvalue of $A_j$ other than $\mu_2,\ldots,\mu_{2(t-1)}$ is in the set $\{\lambda_1,\ldots,\lambda_t\}$. Observe that $A_j$ is an $(j+N)\times (j+N)$ matrix, where $N=(t-1)k+\sum_{i=1}^t m_i$ is the number of rows and columns of $A$. Hence,
  \begin{align*}
     \sum_{i=1}^tm_{A_j}(\lambda_i) = j+N-\sum_{i=1}^{t-1}m_{A_j}(\mu_{2i})=j-(t-1)(k-j)+(t-1)k+\sum_{i=1}^tm_i=\sum_{i=1}^t(m_i+j).
  \end{align*}
  Since the total multiplicity of the eigenvalues $\lambda_1,\dots,\lambda_t$ in $A_j$ is $\sum_{i=1}^t(m_i+j)$, and by eigenvalue interlacing, the multiplicity in $A_j$ of $\lambda_i=\mu_{2i-1}$ is bounded above by $m_i+j$, this must be precisely its multiplicity.
 %
\end{proof}

\begin{corollary}\label{coro:limitationsnotsufficientcondition}
If $A$ is a symmetric matrix with 
${\bf m}(A)=(m_1,k,m_2,k,\ldots,k,m_t)$ where $m_i\ge k\ge t\ge 2$ for each $i$, then $C({\bf m}(A),t)=k$ yet $q(G\vee A)>t$ for all non-empty graphs $G$ with $|G|=k$. Hence, the inequality~\eqref{eq:q(HvA)>=C} is strict in this case.
\end{corollary}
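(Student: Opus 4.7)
The plan is to argue by contradiction: assume there is some $X\in S(G\vee A)$ with $q(X)\le t$, and derive an impossible condition by combining the rigidity of intermediate multiplicity lists (Proposition~\ref{prop:monotone}) with the obstruction from Corollary~\ref{cor:up and down} for borderings with nonzero $(1,2)$-entry.

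First I would compute $C({\bf m}(A),t)=k$. Choosing $p_i=2i-1$ shows $C({\bf m}(A),t)\le k$, since each of the $t-1$ resulting gaps contains exactly one index (in an ``even'' position of $\bf m$, with multiplicity $k$). For the matching lower bound, there are $2t-1$ positions but only $t$ can be used as $p_i$'s, so some gap must be non-empty, and every entry of ${\bf m}(A)$ is at least $k$. The same counting, applied to $s\le t-1$, shows that $C({\bf m}(A),s)\ge 2k$: at least one gap must contain two consecutive indices, one odd (multiplicity $\ge k$) and one even (multiplicity $k$). This second fact pins down $\min\{s\ge 2: C({\bf m}(A),s)\le k\}=t$, so the right-hand side of~\eqref{eq:q(HvA)>=C} equals $t$; moreover, via Proposition~\ref{prop:C-bordering}, it forces $q(X)=t$ exactly (the case $q(X)=1$ is excluded since $G$ being non-empty makes $X$ non-scalar).

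With $q(X)=t$ secured, write $X=A_k$ as the endpoint of a sequence of $1$-borderings $A=A_0,A_1,\dots,A_k$. Proposition~\ref{prop:monotone} then determines every intermediate multiplicity list: the even-position eigenvalues $\mu_{2i}$ of $A$ appear in $A_j$ with multiplicity $k-j$, while each odd-position eigenvalue $\mu_{2i-1}$ appears with multiplicity $m_i+j$. Because $G$ is non-empty it contains an edge, and by permuting the vertices of $G\vee H$ we may assume that the final two $1$-borderings correspond to an adjacent pair, so that $X_{1,2}\ne0$.

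The crux of the argument is the contradiction delivered by Corollary~\ref{cor:up and down} applied to $A_{k-2},A_{k-1},A_k$: it requires an eigenvalue $\lambda$ of $A_{k-1}$ whose multiplicity jumps up by one from $A_{k-2}$ to $A_{k-1}$ and then back down by one from $A_{k-1}$ to $A_k$. But the forced formulas leave no room for this pattern. At each odd-position eigenvalue $\mu_{2i-1}$ the multiplicities are $m_i+(k-2)<m_i+(k-1)<m_i+k$, strictly increasing, so the jump down fails; at each even-position eigenvalue $\mu_{2i}$ the multiplicities are $2>1>0$, strictly decreasing, so the jump up fails. No $\lambda$ can satisfy the conclusion, contradicting $X_{1,2}\ne0$. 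This yields $q(G\vee A)>t$, and combined with the computed right-hand side gives strictness in~\eqref{eq:q(HvA)>=C}. The main obstacle is arranging that Corollary~\ref{cor:up and down} can be applied in the \emph{last} two bordering steps, which is what motivates the permutation/relabeling of vertices of $G$; once Proposition~\ref{prop:monotone} locks down the intermediate spectra, the arithmetic mismatch is immediate.
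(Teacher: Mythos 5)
Your proof is correct and takes essentially the same route as the paper's: both arguments combine the forced monotone multiplicity lists from Proposition~\ref{prop:monotone} with the up-then-down requirement of Corollary~\ref{cor:up and down} to rule out a nonzero off-diagonal entry in the leading $k\times k$ block. The only (cosmetic) difference is that you permute a single edge of $G$ into the last two bordering positions and contradict once, whereas the paper applies Corollary~\ref{cor:up and down} at every step and then permutes to conclude the whole leading block is diagonal.
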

\begin{proof}
    We have $C({\bf m}(A),t)=k$, so $q(B)\ge t$ for all $k$-borderings $B$ of $A$ by Proposition~\ref{prop:C-bordering}.
    Consider a sequence of successive $1$-borderings taking us from $A$ to some $k$-bordering $B$ with $q(B)=t$. By Proposition~\ref{prop:monotone}, the successive eigenvalue multiplicities of any given $\lambda\in \bR$ in this sequence of matrices is monotone. Hence, by Corollary~\ref{cor:up and down}, the superdiagonal of the leading principal $k\times k$ submatrix of $B$ is zero.
    
    Now let $P$ be a $k\times k$ permutation matrix, and consider $B_P=(P\oplus I_r)B(P^T\oplus I_r)$, where $k+r=|G|$. By the previous paragraph, the superdiagonal of the leading principal $k\times k$ submatrix of $B_P$ is zero, for every such permutation matrix~$P$. 
    Hence, 
    every off-diagonal entry of $B$ is zero, so $B$ has an empty graph. 
\end{proof}

This shows a limitation of Algorithm~\ref{algo}.
However, we show in the following proposition that this limitation is very specific, and that if the multiplicity list is perturbed only slightly we may have success using this procedure.

\begin{proposition}
Suppose $t\ge2$ and $A$ is a symmetric matrix with eigenvalues 
\[\lambda_1^{(m_1)} < \beta < \gamma < \lambda_2^{(m_2)} < \mu_2^{(2)} < \lambda_3^{(m_3)}< \mu_3^{(2)} < \cdots < \mu_{t-1}^{(2)} < \lambda_t^{(m_t)}.
\]
If $A$ has an eigenbasis such that for each vertex $u$ there is at least one eigenvector corresponding to an eigenvalue in $\{\mu_i\}$ which is nonzero in the entry corresponding to $u$, then there exists a matrix $B\in S(K_2\vee A)$ 
such that $B$ has eigenvalues $\lambda_1^{(m_1+2)},\ldots , \lambda_t^{(m_t+2)}$. In particular, $q(K_2 \vee G(A)) \leq t$.
\end{proposition}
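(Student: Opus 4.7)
The plan is to realize $B$ via two successive $1$-borderings of $A$, as in Algorithm~\ref{algo}, exploiting the freedom in those borderings to place $B$ in $S(K_2\vee G(A))$. First observe that $\mathbf{m}(A)=(m_1,1,1,m_2,2,\dots,2,m_t)$ satisfies $C(\mathbf{m}(A),t)=2$: partitioning at the positions of $\lambda_1,\dots,\lambda_t$ makes every gap have total multiplicity exactly $2$. So by Proposition~\ref{prop:C-bordering} some $2$-bordering with $\leq t$ distinct eigenvalues certainly exists; the entire task is to shape such a $2$-bordering so that its graph is $K_2\vee G(A)$.

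\textbf{First bordering.} Fix any $n_2\in(\beta,\gamma)$ and apply Corollary~\ref{lem:1-border} with $\mathcal{R}_0=\{\beta,\gamma,\mu_2,\dots,\mu_{t-1}\}$ and $\mathcal{N}=\{\lambda_1,n_2,\lambda_2,\dots,\lambda_t\}$, which strictly interlaces $\mathcal{R}_0$. The output $A_1$ has the form $\bigl(\begin{smallmatrix}\alpha&(U_0\mathbf{b})^T\\ U_0\mathbf{b}&A\end{smallmatrix}\bigr)$ with spectrum $\{\lambda_1^{(m_1+1)},n_2,\lambda_2^{(m_2+1)},\mu_2,\dots,\mu_{t-1},\lambda_t^{(m_t+1)}\}$, where the columns of $U_0$ are orthonormal eigenvectors of $A$ for $\mathcal{R}_0$: uniquely determined up to sign for the simple eigenvalues $\beta,\gamma$, but freely chosen from each two-dimensional $\mu_i$-eigenspace. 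By the eigenvector hypothesis some $\mu_i$-eigenvector is nonzero at each vertex $u$, and a genericity argument on the angles parameterising the chosen unit vectors $u_i$ yields a choice making $U_0\mathbf{b}$ nowhere zero. Hence $A_1\in S(K_1\vee G(A))$, and the new simple eigenvalue $n_2$ will play the role of the ``up-then-down'' eigenvalue demanded by Remark~\ref{rk:up-and-down}.

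\textbf{Second bordering.} Apply Corollary~\ref{lem:1-border} again with $\mathcal{R}_0'=\{n_2,\mu_2,\dots,\mu_{t-1}\}$ (each element simple in $A_1$) and $\mathcal{N}'=\{\lambda_1,\dots,\lambda_t\}$, yielding $B=\bigl(\begin{smallmatrix}\alpha_2&(U_0'\mathbf{b}')^T\\ U_0'\mathbf{b}'&A_1\end{smallmatrix}\bigr)$ with spectrum $\{\lambda_i^{(m_i+2)}\}_{i=1}^t$. It remains to check that $U_0'\mathbf{b}'$ has no zero entries. For the top entry $B_{12}$: the final clause of Theorem~\ref{thm:1-bordering} identifies the $n_2$-eigenvector of $A_1$ as built from the $n_2$-eigenvector of the auxiliary matrix $B_1=\bigl(\begin{smallmatrix}\alpha&\mathbf{b}^T\\ \mathbf{b}&D_0\end{smallmatrix}\bigr)$; since $\mathbf{b}$ is nowhere zero and $n_2\notin\sigma(D_0)$, the first coordinate $v_2$ of this eigenvector is nonzero, so $(U_0'\mathbf{b}')_1=v_2\,b'_{n_2}\neq 0$ and the $K_2$ edge is present. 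The remaining entries of $U_0'\mathbf{b}'$ are linear combinations of eigenvectors of $A$ corresponding to $\beta$, $\gamma$, and the $\mu_i$, with coefficients depending on the free parameters $(u_i,n_2,\alpha)$; the eigenvector hypothesis again makes the vanishing condition at each vertex a proper subvariety of the parameter space, so a generic choice renders every remaining entry nonzero.

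\textbf{Main obstacle.} The principal difficulty is executing a single genericity argument handling \emph{both} borderings simultaneously. At each vertex $u$ one obtains two algebraic vanishing conditions (one per step), and the eigenvector hypothesis is precisely what guarantees each is a proper subvariety of the joint parameter space of the angles defining the $u_i$'s together with the scalar $n_2$---it ensures some $\mu_i$-eigenvector is nonzero at $u$, preventing the relevant linear combinations from being identically zero. Since the vertex set is finite, the complement of these finitely many proper subvarieties is nonempty, furnishing $B\in S(K_2\vee G(A))$ with the prescribed spectrum, and hence $q(K_2\vee G(A))\leq t$.
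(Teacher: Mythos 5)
Your proof is essentially correct, but it follows a genuinely different route from the paper's. You build $B$ as two \emph{successive} $1$-borderings in the spirit of Algorithm~\ref{algo}, introducing an auxiliary simple eigenvalue $n_2\in(\beta,\gamma)$ at the first step and deleting it at the second; this is exactly the ``up-then-down'' eigenvalue that Corollary~\ref{cor:up and down} and Remark~\ref{rk:up-and-down} force upon any $2$-bordering landing in $S(K_2\vee G(A))$, and your interlacing checks ($\mathcal N$ against $\mathcal R_0$, then $\mathcal N'$ against $\mathcal R_0'$) and the resulting multiplicity bookkeeping are all correct. The paper instead constructs the $2$-bordering in a single step: it uses Boley--Golub to produce two $t\times t$ bordered-diagonal matrices $C_\beta$ and $C_\gamma$ with spectrum $\{\lambda_1,\dots,\lambda_t\}$, interleaves their bordering rows into one block matrix $M$ that is permutation-similar to $C_\beta\oplus C_\gamma\oplus D_0$, and then conjugates by $U\oplus VW$, where $W$ carries a free $2\times2$ rotation in each $\mu_i$-eigenspace and $U$ is a free $2\times 2$ orthogonal matrix. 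Both arguments spend the same resource --- the rotational freedom inside the two-dimensional $\mu_i$-eigenspaces, made usable by the eigenvector hypothesis --- to kill the vanishing conditions at each vertex via a genericity/measure-zero argument. What the paper's one-shot construction buys is that it never has to analyse eigenvectors of the intermediate matrix $A_1$: the $(1,2)$-entry is handled simply because $D_1=\diag(k-\beta,k-\gamma)$ is non-scalar ($\beta\neq\gamma$). Your route has to verify that the $\mu_i$-eigenvectors of $A_1$ have zero first entry while the $n_2$-eigenvector does not (which you do, correctly, via the final clause of Theorem~\ref{thm:1-bordering}), and that the coefficients entering the second bordering vector ($\mathbf b'$ and the $n_2$-eigenvector components coming from the auxiliary matrix $B_1$) do not depend on the rotation angles, so that non-constancy in each angle can be read off from the hypothesis; you should state this independence explicitly, but it does hold, so the joint genericity argument in your closing paragraph goes through. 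In exchange, your version makes transparent \emph{why} the perturbed multiplicity list escapes the obstruction of Corollary~\ref{coro:limitationsnotsufficientcondition}: the split of the first gap into $\beta<\gamma$ is precisely what permits a non-monotone eigenvalue $n_2$.
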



\begin{proof}
By~\cite{boley-golub} we know that there are $1$-borderings $C_\beta$ and $C_\gamma$ of the matrices $\mathrm{diag}(\beta, \mu_2,\ldots, \mu_t)$ and $\mathrm{diag}(\gamma, \mu_2,\ldots, \mu_t)$ respectively which each have eigenvalues $\{\lambda_1,\ldots, \lambda_t\}$. Furthermore, we know that these borderings can have no zeros in the first row or column, and by computing traces we see that the $(1,1)$ entries are $k-\beta$ and $k-\gamma$ respectively, where $k=\lambda_1+\dots+\lambda_t-(\mu_2+\dots+\mu_t)$. Let the first row of $C_\beta$ have entries $k-\beta, b_1, \ldots, b_{t-1}$ and the first row of $C_\gamma$ have entries $k-\gamma, c_1,\ldots, c_{t-1}$. Define $B_0=[v_\beta,v_\gamma]$ where $v_\beta=(k-\beta,0,b_1,0,b_2,0,\dots)^T$, and $v_\gamma=(0,k-\gamma,0,c_1,0, c_2,\dots)^T$. That is, we are making vectors with the first rows of the borderings in the even or odd positions. Now define matrices $D_1=\diag(k-\beta, k-\gamma)$, $D_2=\diag(\beta, \gamma, \mu_2^{(2)},\ldots, \mu_{t-1}^{(2)})$ and $D_0=\diag(\lambda_1^{(m_1)},\ldots, \lambda_t^{(m_t)})$, and finally define
 \[ M=\begin{pmatrix}
k-\beta&&b_1&&b_2&&\cdots&b_{t-1}&\\
&k-\gamma&&c_1&&c_2&\cdots&&c_{t-1}\\
b_1&&\beta\\
&c_1&&\gamma\\
b_2&&&&\mu_2\\
&c_2&&&&\mu_2\\
&&&&&&\ddots\\
b_{t-1}&&&&&&&\mu_{t-1}\\
&c_{t-1}&&&&&&&\mu_{t-1}
\end{pmatrix}\oplus D_0\]
where the blank entries denotes $0$s.
%
%
Since $M$ is permutationally similar to the block diagonal matrix with blocks $C_\beta$, $C_\gamma$, and $D_0$, the eigenvalues of $M$ 
are $\lambda_1^{(m_1+2)},\ldots, \lambda_t^{(m_t+2)}$.

By the assumption, we may choose $V$ to be a matrix which diagonalizes the matrix $A$ such that for any row $u$, there is a column $j$ corresponding to an eigenvector of some $\mu_\ell$ such that $V_{uj} \not=0$.  Without loss of generality assume that $$V^TAV = \mathrm{diag}(\beta, \gamma, \mu_2^{(2)},\ldots, \mu_{t-1}^{(2)}, \lambda_1^{(m_1)},\ldots, \lambda_t^{(m_t)})=D_2\oplus D_0.$$ Define $W' = I_2 \oplus W_2 \oplus \cdots \oplus W_{t-1}$ where the $W_i$ are any orthogonal $2\times 2$ matrices, and define $W = W' \oplus I_{m_1+\cdots + m_t}$. Then, as $W'$ commutes with $D_2$, we have that $$W^T V^T A V W = V^T A V=D_2\oplus D_0.$$

Let $V'$ be the first $2t-2$ columns of $V$, so that it has columns that are the eigenvectors corresponding to the eigenvalues of $D_2$. Notate these columns by $v_1, \ldots, v_{2t-2}$. Let $U$ be any orthogonal $2\times 2$ matrix. Then 


\begin{align*}
(U \oplus VW) M (U^T \oplus W^TV^T)&=(U \oplus VW) \begin{pmatrix} D_1 & B_0 \\ B_0^T & D_2\oplus D_0 \end{pmatrix} (U^T \oplus W^TV^T)\\
&= \begin{pmatrix} U D_1 U^T & U B_0 W^TV^T\\
VWB_0^T U^T & A\end{pmatrix}.
\end{align*}
This matrix is in $S(K_2\vee G(A))$ if $UD_1 U^T$ has nonzero off-diagonal entries and the following matrix has no zero entry:
\[
UB_0W^TV^T = U B_0'(W')^T(V')^T, \quad\text{where}\quad B_0'=\begin{pmatrix} b_1 & & b_2 & & \cdots & b_{t-1} & \\
& c_1 & & c_2 & \cdots & & c_{t-1}\end{pmatrix}.
\]

Let $\theta_2,\ldots, \theta_{t-1}$ be uniformly and independently chosen angles and let $W_i$ be the $2\times 2$ rotation matrix by angle $\theta_i$. Then the $ij$'th entry of $B_0'(W')^T(V')^T$ is
\[
b_1v_1(j) + \sum_{k=2}^{t-1} b_k v_{2k-1}(j) \cos \theta_k - b_k v_{2k}(j) \sin \theta_k
\]
if $i=1$ and 
\[
c_1v_2(j) + \sum_{k=2}^{t-1} c_k v_{2k-1}(j) \sin \theta_k +c_k v_{2k}(j) \cos \theta_k
\]
if $i=2$. Since the $b_i$ and $c_i$ are nonzero, and by the choice of $V$ there is at least one $u$ with $3\leq u \leq 2t-2$ with $v_u(j) \not=0$, we have that the $ij$'th entry of $B_0'(W')^T(V')^T$ is nonzero with probability $1$. So we can choose $W'$ for which $B_0'(W')^T(V')^T$ has no zero entries. Moreover, since $\beta \not=\gamma$, $D_1$ is not a zero matrix. It is now easy to choose $U$ such that $UD_1U^T$ and $UB_0W^TV^T$ have all nonzero entries.
%
\end{proof}

In this paper we continued the study of the behaviour of $q(G\vee H)$. For a general graph $H$, we obtained 
results for the case when $G$ is either a path or a complete graph, and we explored the potential impact of eigenvector patterns on $q(G\vee H)$, for various families of graphs $H$.

\subsection*{Acknowledgements}
This project started and was made possible by the online research community \textit{Inverse eigenvalue problems for graphs}, which is sponsored by the American Institute of Mathematics with support from the US National Science Foundation. The authors thank AIM and the research community organizers for their support.

We are grateful to the anonymous referee for careful reading and comments, which improved the presentation of the paper.

\bibliographystyle{plain}
\bibliography{references}

\end{document}